\newtheoremstyle{standard}%
{9pt}%
{9pt}%
{\it}
{}%
{\bfseries}%
{}
{ }%
{#3}%
\newcommand{\db}[1]{(\!({#1})\!)}
\numberwithin{equation}{section}
\newcommand{\N}{{\mathbb N}}
\newcommand{\Z}{{\mathbb Z}}
\newcommand{\C}{{\mathbb C}}
\newcommand{\wi}{i}
\newcommand{\wj}{j}
\newcommand{\wk}{k}
\newcommand{\wpp}{p}
\newcommand{\wq}{q}
\DeclareMathOperator{\Hom}{Hom} 
\DeclareMathOperator{\End}{End}
\DeclareMathOperator{\Res}{Res}
\DeclareMathOperator{\Span}{Span}
\DeclareMathOperator{\wt}{wt}
\newcommand{\ewo}{u}
\newcommand{\ewt}{v}
\newcommand{\glb}[2]{\Phi(#1 | #2)}
\newcommand{\free}{F}
\newcommand{\inter}{I_{\Z}}
\newcommand{\tf}{g}
\newcommand{\hY}{\hat{Y}}
\newcommand{\lws}{\Omega_{(3)}}
\newcommand{\lwt}{\Omega_{(2)}}
\newcommand{\Yu}{Y}
\newcommand{\wx}{x}
\newcommand{\wl}{l}
\newcommand{\wn}{n}
\newcommand{\wm}{m}
\newcommand{\wy}{y}
\newcommand{\1}{{\bf 1}}
\newcommand{\ideal}{J}
\newtheorem{lemma}{Lemma}[section]
\newtheorem{theorem}[lemma]{Theorem}
\newtheorem{proposition}[lemma]{Proposition}
\newtheorem{corollary}[lemma]{Corollary}
\theoremstyle{definition}
\newtheorem{definition}[lemma]{Definition}
\newtheorem{remark}[lemma]{Remark}
\theoremstyle{standard}
\newtheorem*{writing}{}
\title{A generalization of intertwining operators for vertex operator algebras}
\author{Kenichiro Tanabe\footnote{Research was partially supported by the Grant-in-aid
(No. 24540003 and No. 23224001 (S)) for Scientific Research, JSPS.}\\\\
Department of Mathematics\\
Hokkaido University\\
Kita 10, Nishi 8, Kita-Ku, Sapporo, Hokkaido, 060-0810\\
Japan\\\\
ktanabe@math.sci.hokudai.ac.jp}
\date{}
\begin{document}
\maketitle

\abstract{
For a vertex operator algebra $V$,
we generalize the notion of an intertwining operator
among an arbitrary triple of $V$-modules to an arbitrary triple of $\N$-graded weak $V$-modules and study their properties. 
We show a formula for the dimensions of the spaces of these intertwining operators in terms of modules over
the Zhu algebras under some conditions on $\N$-graded weak modules.}

\bigskip
\noindent{\it Mathematics Subject Classification.} 17B69

\noindent{\it Key Words.} vertex operator algebras, intertwining operators, $\N$-graded weak modules, Zhu algebras.

\section{\label{section:introduction}Introduction}

Let $V$ be a vertex operator algebra.
The purpose of this paper is to generalize the notion of an intertwining operator
among an arbitrary triple of $V$-modules to an arbitrary triple of $\N$-graded weak $V$-modules.
In the representation theory of groups or Lie algebras, 
the tensor product of 
two modules is the tensor product vector space whose module structure is defined
by means of a natural coproduct operation, 
and 
an intertwining operator is defined to be a module homomorphism from the tensor product of two modules
to a third module. 
In contrast, in the representation theory of vertex operator algebras,
first the notion of an intertwining operator among an arbitrary triple of modules is defined in \cite[Definition 5.4.1]{FHL}, and 
then the tensor product of two modules is defined by using intertwining operators.
Note that the existence of the tensor product of two modules over a vertex operator algebra is not guaranteed
in general. 
The dimension of the space of all intertwining operators among a triple of modules is called the {\it fusion rule}. It is a natural problem to determine fusion rules for a given vertex operator algebra.
In \cite[Theorem 1.5.3]{FZ}, Frenkel and Zhu give a formula for the fusion rule 
among an arbitrary triple of irreducible modules in terms of modules over the Zhu algebra 
as a generalization of a result in \cite{TK} for WZW models.
Here we have to be careful that \cite[Theorem 1.5.3]{FZ} is correct for rational vertex operator algebras,
however, 
is not correct for non-rational vertex operator algebras in general
as pointed out at the end of \cite[Section 2]{Li}. A modified result is 
given in \cite[Theorem 2.11]{Li}.
For many vertex operator algebras, fusion rules among triples of irreducible modules have been determined
by using \cite[Theorem 1.5.3]{FZ} and \cite[Theorem 2.11]{Li} (see for example
\cite{A1},\cite{A2},\cite{ADL}, \cite{T1}, \cite{TY1}, and \cite{TY2}). 

 The definition of an intertwining operator in \cite[Definition 5.4.1]{FHL} makes sense
even for weak modules, however, is no longer enough.
Actually based on logarithmic conformal filed theories in physics,
in \cite{M1} a generalization of the notion of an intertwining operator, called
a {\it logarithmic intertwining operator},
is given among an arbitrary triple of logarithmic modules by allowing logarithmic terms.
Here a logarithmic module is  an $\N$-graded weak module such that 
each homogeneous space is a generalized $L(0)$-eigenspace.
A generalization of the formula in \cite[Theorem 1.5.3]{FZ} and \cite[Theorem 2.11]{Li} to logarithmic intertwining operators
is given in \cite[Theorem 6.6]{HY}.

The aims of this paper are to introduce a generalization of the notion of an intertwining operator, which I call
a {\it $\Z$-graded intertwining operator} (see Definition \ref{definition:Z-inter}),
among an arbitrary triple of general $\N$-graded weak modules and to study their properties. 
For logarithmic modules, a $\Z$-graded intertwining operator
is essentially the same as a logarithmic intertwining operator
as we will see later in Section \ref{section:log}.
To explain the main idea, we recall a few facts about intertwining operators for (ordinary) modules over
a vertex operator algebra $(V,Y,\1,\omega)$.
For three $V$-modules $W_i=\oplus_{j=0}^{\infty}(W_i)_{\lambda_i+j}$ with 
lowest weight $\lambda_i\in\C$, $i=1,2,3$ and 
an intertwining operator $I(\ , x) :
W_1\otimes_{\C}W_2\rightarrow W_3\{x\}$,
we define an operator $I^{o}(u,x)=\sum_{i\in\C}u^{o}_{i}x^{-i-1}=x^{\lambda_1+\lambda_2-\lambda_3}I(\ ,x)$,
which is already appeared in \cite[(1.5.3)]{FZ},  \cite[Remark 5.4.4]{FHL}, and \cite[(2.12)]{Li}. 
Here $W_3\{x\}=\{\sum_{\alpha\in\C}w_{\alpha}x^{\alpha}\ |\ 
w_{\alpha}\in W_3\ (\alpha\in\C)\}$.
Then, $I^{o}(u,x)$ is a map from $W_1\otimes_{\C}W_2$ to $W_3\db{x}=
\{\sum_{i\in\Z}w_{i}x^{i}\ |\ w_{i}\in W_3\ (i\in\Z)
\mbox{ and } w_{i}=0, i\ll 0\}$
and $I(\ ,x)$ can be written
as
\begin{align}
\label{eq:conj-L(0)}
I(u,x)v&=\sum_{i\in\Z}x^{L(0)}(x^{-L(0)}u)_i^{o}x^{-L(0)}v
\end{align}
for $u\in W_1$ and $v\in W_2$.
Here for the coefficient $L(0)$ of $x^{-2}$ in each $Y_{W_i}(\omega,x)$, $i=1,2,3$,
we define
\begin{align}
\label{eq:xpmL(0)w}
x^{\pm L(0)}w&=x^{\pm \lambda}w
\end{align}
for $w\in W_i$ with $L(0)w=\lambda w, \lambda\in\C$
and extend $x^{\pm L(0)}w$ for an arbitrary $w\in W_i,i=1,2,3$ by linearity. 
We note that $I^{o}(\ ,x)$ satisfies all the conditions in the definition of 
intertwining operator \cite[Definition 5.4.1]{FHL} except the so called $L(-1)$-derivative property.
For (ordinary) $V$-modules, 
$I^{o}(\ ,x)$
is nothing but a $\Z$-graded intertwining operator
as we will see later in Proposition \ref{proposition:ord-Z-graded}.
The main idea is that we 
redefine $x^{\pm L(0)}$ to be formal variables such that
\begin{align}
x\dfrac{d}{dx}x^{\pm L(0)}=x^{\pm L(0)}(\pm L(0)).
\end{align}
Since this definition of 
$x^{\pm L(0)}$ makes sense for $\N$-graded weak modules,
we can define \lq\lq intertwining operators\rq\rq for $\N$-graded weak $V$-modules by 
using $I^{o}(\ ,x)$ and \eqref{eq:conj-L(0)}.
Moreover, applying $xd/dx$ to both sides of \eqref{eq:conj-L(0)} and 
using Borcherds identity, we automatically get the $L(-1)$-derivative property.

We expect that various results for intertwining operators
can be generalized to $\Z$-graded intertwining operators.
As the main result of this paper
I will show 
a formula for the fusion rules  as a 
generalization of  \cite[Theorem 1.5.3]{FZ} and \cite[Theorem 2.11]{Li} in Theorem \ref{theorem:correspondence}. 
To state the result precisely, we prepare following symbols.
For a vertex operator algebra $V$ and a weak $V$-module $W$,
$A(V)$ is the Zhu algebra defined in \cite[Section 2.1]{Z} 
and $A(W)$ is the $A(V)$-bimodule defined in \cite[Theorem 1.5.1]{FZ}.
For a left $A(V)$-module $U$, 
$S(U)=\oplus_{j=0}^{\infty}S(U)(j)$ is the generalized Verma module with 
$S(U)(0)=U$ given in \cite{DLM1}, $U^{*}=\Hom_{\C}(U,\C)$, 
$S(U)^{\prime}=\oplus_{j=0}^{\infty}\Hom_{\C}(S(U)(j),\C)$，and 
$\inter\binom{S(\lws^{*})^{\prime}}{W_1\ S(\lwt)}$
is the space of all $\Z$-graded intertwining operators of type 
$\binom{S(\lws^{*})^{\prime}}{W_1\ S(\lwt)}$.
Now we state the main result:
\begin{writing}[Theorem \ref{theorem:correspondence}]
For  an $\N$-graded weak $V$-module $W_1$ and
two left $A(V)$-modules $\Omega_{(2)}$ and $\Omega_{(3)}$,
the  map
\begin{align*}
\inter\binom{S(\lws^{*})^{\prime}}{W_1\ S(\lwt)}
&\rightarrow \Hom_{A(V)}(A(W_1)\otimes_{A(V)}\lwt,\lws)\nonumber\\
\Phi(\ ,x)&\mapsto o^{\Phi}& \eqref{eq:one-one-iinter-N}
\end{align*}
is a linear isomorphism.
\end{writing}

As a direct consequence of Theorem \ref{theorem:correspondence}, we
obtain the following result.
\begin{writing}[Corollary \ref{corollary:correspondence}]
	Let $W_i=\oplus_{j=0}^{\infty}W_i(j), i=1,2,3$ be three $\N$-graded weak $V$-modules such that
	$W_2$ and $W_3^{\prime}$ are generalized Verma $V$-modules 
	and $\dim_{\C}W_3(j)<\infty$ for all $j\in\N$.
	Then, the  map
	\begin{align*}
	\inter\binom{W_3}{W_1\ W_2}
	&\rightarrow \Hom_{A(V)}(A(W_1)\otimes_{A(V)}W_2(0),W_3(0))\nonumber\\
	\Phi(\ ,x)&\mapsto o^{\Phi}&\eqref{eq:one-one-iinter-N-c}
	\end{align*}
	is a linear isomorphism.
\end{writing}

\noindent{}Here we define 
$o^{\Phi}(u)=\Phi(u;\deg u-1)$ for homogeneous $u\in W_1$ and
extend $o^{\Phi}(u)$ for an arbitrary $u\in W_1$ by linearity(see \eqref{eq:inter-zhu}). 
To show the main result, we will modify the proofs of \cite[Theorem 6.6]{HY}, \cite[Theorem 2.11]{Li}, 
and \cite[Theorem 2.2.1]{Z} so as not to  use the $L(-1)$- derivative property.
For some vertex operator algebras and their modules,
we can compute the right-hand side of \eqref{eq:one-one-iinter-N}.
For instance, let us consider Verma modules $M_{c,h}$, $c,h\in\C$ over the Virasoro vertex operator algebra
$M_{c}$ where we use the notation in \cite[Section 4]{FZ}.
In this case, the same computation as in \cite[Section 2]{Li} shows
that the right-hand side of \eqref{eq:one-one-iinter-N} reduces to the following simple form: 
\begin{align*}
\Hom_{A(M_{c})}(A(M_{c,h})\otimes_{A(M_{c})}\lwt,\lws)&\cong\Hom_{\C}(\lwt,\lws).
\end{align*}

The organization of the paper is as follows. 
In Section \ref{section:preliminary} we recall some basic properties of 
the Zhu algebras, bimodules over the Zhu algebras, and $\N$-graded weak modules.
In Section \ref{section:Z-inter} we recall some  basic facts about intertwining operators and introduce the notion of 
a $\Z$-graded intertwining operator.
In Section \ref{section:log} for an arbitrary  triple of logarithmic modules $W_i, i=1,2,3$,
we construct a linear 
isomorphism from the space of all logarithmic intertwining operators of type $\binom{W_3}{W_1\ W_2}$
to the space of all $\Z$-graded intertwining operators of the same type.
In Section \ref{section:correspondence} we will show the main result.
In Section \ref{section:notation} we list some notations.

\section{\label{section:preliminary}Preliminary}
We assume that the reader is familiar with the basic knowledge on
vertex algebras as presented in \cite{B}, \cite{FLM}, and \cite{LL}. 
Throughout this paper, $\N$ denotes the set of all non-negative integers,
$x,y,x_0,x_1,x_2,\ldots$ are commutative formal variables, and
$(V,Y,{\mathbf 1},\omega)$ is a vertex operator algebra.
For the Virasoro element $\omega$ of $V$ and a weak $V$-module $(M,Y_{M})$, we write
\begin{align}
Y_{M}(\omega,x)=\sum_{i\in\Z}L(i)x^{-i-2}.
\end{align}

We recall some properties of the Zhu algebra $A(V)$ of $V$ and  
the $A(V)$-bimodules associated with weak modules from \cite[Section 2]{Z} and \cite[Section 1]{FZ}, and \cite[Section 2]{Li}.
Let $M$ be a weak $V$-module. 
For homogeneous $a\in V$ and $u\in M$, we define
\begin{align}
\label{eq:zhu-ideal-multi}
a\circ u&=\Res_{x}(1+x)^{\wt a}x^{-2}Y_{M}(a,x)u\in M
\end{align}
and 
\begin{align}
\label{eq:zhu-bimodule-left}
a*u&=\Res_{x}(1+x)^{\wt a}x^{-1}Y_{M}(a,x)u\in M,\\
\label{eq:zhu-bimodule-right}
u*a&=\Res_{x}(1+x)^{\wt a-1}x^{-1}Y_{M}(a,x)u\in M.
\end{align}
Here, $\Res_{x}$ is defined by
\begin{align}
\Res_{x}f(x)=f_{-1}
\end{align}
for $f(x)=\sum_{i\in\Z}f_ix^i\in M[[x,x^{-1}]]$.
We extend \eqref{eq:zhu-ideal-multi}--\eqref{eq:zhu-bimodule-right} for an arbitrary $a\in V$ by linearity.
We also define
\begin{align}
\label{eq:zhu-bimodule-ideal}
O(M)&=\Span_{\C}\{a\circ u\ |\ a\in V, u\in M\}
\end{align}
and take the following quotient space:
\begin{align}
\label{eq:zhu-bimodule}
A(M)&=M/O(M).
\end{align}
If one takes $M=V$, then $A(V)$ is an associative $\C$-algebra,
 called the {\em Zhu algebra} of $V$, with multiplication  
\eqref{eq:zhu-bimodule-left} by \cite[Theorem 2.1.1]{Z}.
It follows from \cite[Theorem 1.5.1]{FZ} that
$A(M)$ is an $A(V)$-bimodule under the actions \eqref{eq:zhu-bimodule-left} and \eqref{eq:zhu-bimodule-right}.
It follows from the proof of \cite[Lemma 2.1.2]{Z} that for homogeneous $a\in V$ and $u\in M$,
\begin{align}
\label{eq:leq-2}
\Res_{x}(1+x)^{\wt a}x^{k}Y_{M}(a,x)u\in O(M)\mbox{ for }k\leq -2.
\end{align}
We shall use elements of $M$ to represent elements of $A(M)$.
For a left $A(V)$-module $U$ and $a\in A(V)$, we denote the action of $a$ on $U$ by $o(a)$:
\begin{align}
A(V) &\rightarrow \End_{\C}(U)\nonumber\\
a&\mapsto o(a).
\end{align}

A weak $V$-module $M$ is called {\it $\N$-graded}
if $M$ admits a decomposition $M=\oplus_{j=0}^{\infty}M(j)$ such that
\begin{align}
a_{k}M(j)\subset M(\wt a+j-k-1)
\end{align}
for homogeneous $a\in V$, $j\in \N$, and $k\in\Z$.
For an $\N$-graded weak $V$-module $M=\bigoplus_{j=0}^{\infty}M(j)$ and 
$u\in M(j), j\in\N$, we define the {\it degree} of $u$ by
\begin{align}
\deg u&=j.
\end{align}
Following \cite{DLM1},
an $\N$-graded weak $V$-module $M=\oplus_{i=0}^{\infty}M(i)$ is called 
a {\em generalized Verma $V$-module} if 
$M$ is generated by $M(0)$ and 
for every $\N$-graded weak $V$-module $W$ and
every $A(V)$-module homomorphism 
\begin{align}
f : M(0)\rightarrow \{w\in W\ |\ 
a_{i}w=0\mbox{ for homogeneous $a\in V$ and $i\geq \wt a$}\},
\end{align}
there exists a unique $V$-module homomorphism
$F : M\rightarrow W$ such that $F|_{M(0)}=f$. For an arbitrary $A(V)$-module $U$, \cite[Theorem 6.2]{DLM1} shows
 there exists a unique generalized Verma $V$-module $S(U)$ with $S(U)(0)=U$ up to isomorphism,
where $S(U)$ is denoted by $\bar{M}(U)$ in \cite{DLM1}.

\section{$\Z$-graded intertwining operators\label{section:Z-inter}
}
In this section we first recall the definition of an intertwining operator from \cite[Definition 5.4.1]{FHL}
and then introduce the notion of a $\Z$-graded intertwining operator
as a generalization of an intertwining operator.
For a vector space $M$ over $\C$ and $p,q\in\Z$, we define
\begin{align}
M[\wx,\wx^{-1}]_{[p,q]}&=\{\sum_{i=p}^{q}u_{i}x^{i}\ |\ u_p,u_{p+1},\ldots,u_{q}\in M\},\nonumber\\
M\{x\}&=\{\sum_{\alpha\in\C}u_{\alpha}x^{\alpha}\ |\ u_{\alpha}\in M\ 
(\alpha\in\C)\},\nonumber\\
M\db{x}&=\{\sum_{i\in\Z}u_{i}x^{i}\ |\ u_{i}\in M\ (i\in\Z)\mbox{ and }u_{i}=0, i\ll 0\},\mbox{ and}\nonumber\\
M[[x,y]]&=\{\sum_{i,j=0}^{\infty}u_{ij}x^{i}y^{j}\ |\ u_{ij}\in M\ (i,j\in\N)\}.
\end{align}
Define three linear injective maps
\begin{align}
\iota_{\wx,y} : & M[[\wx,y]][\wx^{-1},y^{-1},(\wx-y)^{-1}]\rightarrow 
M\db{x}\db{y},\nonumber\\
\iota_{y,\wx} : &M[[\wx,y]][\wx^{-1},y^{-1},(\wx-y)^{-1}]
\rightarrow M\db{y}\db{x},\mbox{ and}\nonumber\\
\iota_{\wx,\wy-\wx} : & M[[\wx,y]][\wx^{-1},y^{-1},(\wx-y)^{-1}]
\rightarrow M\db{\wx}\db{\wy-\wx}
\end{align}
determined by 
\begin{align}
\label{eqn:expand}
\iota_{\wx,y}\big(\wx^jy^k(\wx-y)^l\big)
&=\sum_{i=0}^{\infty}\binom{l}{i}(-1)^{i}\wx^{j+l-i}y^{k+i},\nonumber\\
\iota_{y,\wx}\big(\wx^jy^k(\wx-y)^l\big)
&=\sum_{i=0}^{\infty}\binom{l}{i}(-1)^{l-i}y^{k+l-i}\wx^{j+i},\mbox{ and}\nonumber\\
\iota_{\wx,\wy-\wx}
\big(\wx^jy^k(\wx-y)^l\big)&=\sum_{i=0}^{\infty}\binom{k}{i}{\wx}^{j+k-i}(-1)^{l}(\wy-\wx)^{l+i}
\end{align}
for $j,k,l\in\Z$ and $\iota_{x,y}(u)=\iota_{y,x}(u)=\iota_{\wx,\wy-\wx}(u)=u$ for $u\in M$.

We recall the definition of an intertwining operator
from \cite[Definition 5.4.1]{FHL}.
\begin{definition}
\label{definition:intertwining-operator}
Let $W_1, W_2$, and $W_3$ be three $V$-modules. 
An {\em intertwining operator} of type $\binom{W_3}{W_1\ W_2}$ is a linear map 
\begin{align}
\label{eq:inter-form}
I(\ , x) : W_1\otimes_{\C}W_2&\rightarrow W_3\{x\}\nonumber\\
I(\ewo, x)\ewt&=\sum_{\alpha\in\C}\ewo_{\alpha}\ewt x^{-\alpha-1},\nonumber\\
&\ewo\in W_1,\ewt\in W_2, \mbox{ and }\ewo_{\alpha}\in \Hom_{\C}(W_2,W_3),
\end{align}
such that the following conditions are satisfied:
\begin{enumerate}
\item
For $\ewo\in W_1,\ewt\in W_2$, and $\alpha\in\C$,
\begin{align}
\label{eq:inter-truncation}
\ewo_{\alpha+m}\ewt&=0\mbox{ for sufficiently large $m\in\N$.}
\end{align}

\item
For $\ewo\in W_1$ and $a\in V$,
\begin{align}
\label{eq:inter-borcherds}
&x_0^{-1}\delta(\dfrac{x_1-x_2}{x_0})Y(a,x_1)I(\ewo,x_2)-
x_0^{-1}\delta(\dfrac{x_2-x_1}{-x_0})I(\ewo,x_2)Y(a,x_1)\nonumber\\
&=x_1^{-1}\delta(\dfrac{x_2+x_0}{x_1})I(Y(a,x_0)\ewo,x_2).
\end{align}
\item ($L(-1)$-derivative property)
For $\ewo\in W_1$,
\begin{align}
\label{eq:inter-derivative}
I(L(-1)\ewo,x)&=\dfrac{d}{dx}I(\ewo,x).
\end{align}\end{enumerate}
\end{definition}
We denote  by $I\binom{W_3}{W_1\ W_2}$
the space of all intertwining operators of type $\binom{W_3}{W_1\ W_2}$
and call its dimension the {\it fusion rule} of type $\binom{W_3}{W_1\ W_2}$. 
A formula for the fusion rule among an arbitrary triple of irreducible modules is 
given in \cite[Theorem 1.5.3]{FZ} and \cite[Theorem 2.11]{Li}.

For a vector space $U$, we define a subspace $U\lfloor \{x\}\rfloor$ of $U\{x\}$ by
\begin{align}
U\lfloor \{x\}\rfloor
&=\Big\{\sum_{\alpha\in\C}u_{\alpha}x^{\alpha}
\ \Big|\ 
\mbox{\begin{tabular}{l}
$u_{\alpha}\in U\ (\alpha\in\C)$ and for any $\alpha\in\C$,\\
$w_{\alpha+i}=0$ for sufficiently small $i\in\Z$
\end{tabular}}
\Big\}.
\end{align}
Standard arguments (cf. \cite[Sections 3.2--3.4]{LL} and \cite[Lemma 2.4]{T2}) show that the condition (2) in Definition \ref{definition:intertwining-operator} is
equivalent to the following condition:
For $u\in W_1, v\in W_2$ and $a\in V$,
there exists
\begin{align}
\label{eq:inter-global}
I(a,u,v|x,y)\in 
W_3[[y]]\lfloor \{x\}\rfloor[\wy^{-1},(\wx-y)^{-1}]
\end{align}
such that 
\begin{align}
\iota_{\wx,y}I(a,u,v|x,y)&=I(u,x)Y_{W_2}(a,y)v,\nonumber\\
\iota_{\wy,\wx}I(a,u,v|x,y)&=Y_{W_3}(a,y)I(u,x)v,\quad\mbox{and }\nonumber\\
\iota_{\wx,\wy-\wx}I(a,u,v|x,y)&=I(Y_{W_1}(a,y-x)u,x)v.
\end{align}
Let $
I(\ , x) : W_1\otimes_{\C}W_2\rightarrow W_3\{x\}$ be an intertwining operator.
For $\alpha\in\C$, by taking $a$ to be the Virasoro element $\omega\in V$ in \eqref{eq:inter-borcherds}
and comparing the 
coefficients of $x_0^{-1}x_1^{-2}x_2^{-\alpha-1}$ in both sides, we have
\begin{align}
\label{eq:relation-borcherds-L(0)}
(L(-1)\ewo)_{\alpha+1}
&=L(0)\ewo_{\alpha}-(L(0)\ewo)_{\alpha}-\ewo_{\alpha}L(0)
\end{align}
and therefore the $L(-1)$-derivative property \eqref{eq:inter-derivative} can be replaced by the following condition:
\begin{align}
\label{eq:relation-L(0)}
x\dfrac{d}{dx}I(\ewo,x)&=
L(0)I(\ewo,x)-I(L(0)\ewo,x)-I(\ewo,x)L(0).
\end{align}
Suppose $W_i, i=1,2,3$ admit decompositions
\begin{align}
\label{eq:v-module-decomposition}
W_i&=\bigoplus_{j=0}^{\infty}(W_i)_{\lambda_i+j}
\end{align}
where $(W_i)_{\lambda_i+j}$ is the eigenspace for $L(0)$ with eigenvalue $\lambda_i+j, j\in\N$.
It follows from \eqref{eq:relation-L(0)} that
\begin{align}
\label{eq:restriction-index}
u_{\alpha}v&=0 \mbox{ for }\alpha\not\in \lambda_1+\lambda_2-\lambda_3+\Z
\end{align}
and
\begin{align}
\label{eq:shift-grading}
u_{\lambda_1+\lambda_2-\lambda_3+k}
(W_2)_{\lambda_2+j} & \subset (W_3)_{\lambda_3+i+j-k-1}
\end{align} 
for $u\in (W_1)_{\lambda_1+i}$, $j\in\N$, and $k\in\Z$.
The properties \eqref{eq:restriction-index} and 
\eqref{eq:shift-grading} are essentially used in the proof of the formula  for the 
fusion rules 
given in \cite[Theorem 1.5.3]{FZ} and \cite[Theorem 2.11]{Li}.

The definition of an intertwining operator in \cite[Definition 5.4.1]{FHL} makes sense
even for weak $V$-modules, however, the condition \eqref{eq:relation-L(0)} seems to be too strong for weak $V$-modules
as explained below.
Let $W_i,i=1,2,3$ be three weak $V$-modules
and $I(\ ,x) : W_1\otimes W_2\rightarrow W_3\{x\}$ a linear map which satisfies all the conditions in
Definition \ref{definition:intertwining-operator}.
Despite the action of $L(0)$ on a weak $V$-module is not necessarily semisimple, 
for $u\in W_1$ and $v\in W_2$, 
\eqref{eq:relation-L(0)} forces that each coefficient of
\begin{align}
L(0)I(u,x)v-I(L(0)\ewo,x)v-I(\ewo,x)L(0)v
\end{align} 
is a scalar multiple of the corresponding coefficient of $I(u,x)v$. 
Moreover, we can not expect a generalization of the formula for the
fusion rules given in \cite[Theorem 1.5.3]{FZ} and \cite[Theorem 2.11]{Li}
because similar conditions like \eqref{eq:restriction-index} and \eqref{eq:shift-grading},
which are essential for these results, do not follow from \eqref{eq:relation-L(0)}. 
Thus, we need to modify the condition \eqref{eq:relation-L(0)}.

To do that, we return to the case of intertwining operators $I(\ ,x)$ among a triple of 
$V$-modules $W_1,W_2$ and $W_3$ as in \eqref{eq:v-module-decomposition}.
We define a map
\begin{align}
I^{o}(\ ,x)=x^{\lambda_1+\lambda_2-\lambda_3}I(\ ,x)
: W_1\otimes_{\C}W_2 & \rightarrow W_3\db{x}\nonumber\\
\ewo\otimes \ewt & \mapsto \sum_{i\in\Z}\ewo^{o}_{i}x^{-i-1},
\end{align}
which is already appeared in \cite[(1.5.3)]{FZ},  \cite[Remark 5.4.4]{FHL}, and \cite[(2.12)]{Li}, 
and we denote $(W_i)_{\lambda_i+j}$ by $W_i(j)$ for $i=1,2,3$ and $j\in\N$.
Then, we  have
\begin{align}
\label{eq:conj-L(0)-re}
I(u,x)v&=x^{-\lambda_1-\lambda_2+\lambda_3}I^{o}(u,x)v\nonumber\\
&=\sum_{i\in\Z}x^{L(0)}(x^{-L(0)}u)_i^{o}x^{-L(0)}v
\end{align}
for $u\in W_1$ and $v\in W_2$.
Here we define 
\begin{align}
\label{eq:xpmL(0)w-re}
x^{\pm L(0)}w&=x^{\pm \lambda}w
\end{align}
for $w\in W_i,i=1,2,3$ with $L(0)w=\lambda w, \lambda\in\C$
and extend $x^{\pm L(0)}w$ for an arbitrary $w\in W_i,i=1,2,3$ by linearity. 
The map $I^{o}(\ ,x)$ 
satisfies \eqref{eq:inter-truncation},
\eqref{eq:inter-borcherds}, and 
\begin{align}
\label{eq:shift-N-grading}
u^{o}_{k}W_2(j) & \subset W_3(i+j-k-1)
\end{align}
for $u\in W_1(i)$, $j\in\N$, and $k\in\Z$ by \eqref{eq:shift-grading}.
Based on the properties \eqref{eq:inter-truncation},
\eqref{eq:inter-borcherds}, and \eqref{eq:shift-N-grading} of
$I^{o}(\ ,x)$, we introduce the following notion:
\begin{definition}\label{definition:Z-inter}
Let $W_1,W_2$ and $W_3$ be three $\N$-graded weak $V$-modules
and 
$\Phi(\ ,x)=\sum_{n\in\Z}\Phi(\ ;n)x^{-n-1}$ a linear map from $W_1\otimes_{\C} W_2$ to $W_3\db{\wx}$.
We call $\Phi$ a {\it $\Z$-graded intertwining operator} of type $\binom{W_3}{W_1\ W_2}$
if 
\begin{enumerate}
\item
For $i,j\in\N$, $k\in\Z$,  and $u\in W_1(i)$, 
\begin{align}
\label{eq:N-shift}
\Phi(u;k)W_2(j)\subset W_3(i+j-k-1).
\end{align}
\item For $u\in W_1, v\in W_2$, and $a\in V$,
there exists
\begin{align}
\label{eq:N-global}
\glb{a,u,v}{\wx,\wy}\in 
W_{3}[[\wx,y]][\wx^{-1},y^{-1},(\wx-y)^{-1}]
\end{align}
such that 
\begin{align}
\iota_{\wx,y}\glb{a,u,v}{\wx,\wy}&=\Phi(u,x)Y_{W_2}(a,y)v,\nonumber\\
\iota_{\wy,\wx}\glb{a,u,v}{\wx,\wy}&=Y_{W_3}(a,y)\Phi(u,x)v,\quad\mbox{and }\nonumber\\
\iota_{\wx,\wy-\wx}\glb{a,u,v}{\wx,\wy}&=\Phi(Y_{W_1}(a,y-x)u,x)v.
\end{align}
\end{enumerate}
\end{definition}
Standard arguments (cf. \cite[Sections 3.2--3.4]{LL} and \cite[Lemma 2.4]{T2}) show that the condition (2) in Definition \ref{definition:Z-inter} is
equivalent to the following {\em Borcherds identity}: for $a\in V$, $\ewo\in W_1$, $\ewt \in W_2$,
and $l,m,n\in\Z$, we have
\begin{align}
\label{eq:borcherds-id-N}
&\sum\limits_{i=0}^{\infty}
\binom{m}{i}\Phi(a_{l+i}\ewo;m+n-i)\ewt\nonumber\\
&=\sum\limits_{i=0}^{\infty}
(-1)^i\binom{l}{i}\big(
a_{m+l-i}\Phi(\ewo;n+i)\ewt+(-1)^{l+1}\Phi(\ewo;m+i)a_{n+l-i}\ewt\big).
\end{align}
We denote by $I_{\Z}\binom{W_3}{W_1\ W_2}$
the space of all $\Z$-graded intertwining operators of type $\binom{W_3}{W_1\ W_2}$.
The following result shows that for $V$-modules
a $\Z$-graded intertwining operator is essentially the same as
an intertwining operator.
\begin{proposition}
\label{proposition:ord-Z-graded}
For three $V$-modules $W_1$, $W_2$, and $W_3$, the map 
\begin{align}
\label{eq:one-one-inter}
I\binom{W_3}{W_1\ W_2}&\rightarrow I_{\Z}\binom{W_3}{W_1\ W_2}\nonumber\\
I(\ ,x)&\mapsto I^{o}(\ , x)
\end{align}
is a linear isomorphism.
\end{proposition}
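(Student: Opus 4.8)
The plan is to realize the stated map and its inverse as multiplication by a fixed complex power of $x$. Write $s=\lambda_1+\lambda_2-\lambda_3$, so that by definition $I^{o}(\ ,x)=x^{s}I(\ ,x)$, and conversely the only candidate preimage of a $\Z$-graded intertwining operator $\Phi$ is $x^{-s}\Phi(\ ,x)$. Both operations are manifestly $\C$-linear and are mutually inverse as maps on the space of all linear maps $W_1\otimes_{\C}W_2\to W_3\{x\}$. Consequently linearity and injectivity of $I\mapsto I^{o}$ are immediate, and the whole statement reduces to two well-definedness claims: that $I^{o}$ is a $\Z$-graded intertwining operator whenever $I$ is an intertwining operator, and that $x^{-s}\Phi$ is an intertwining operator whenever $\Phi$ is a $\Z$-graded intertwining operator. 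The second claim carries the content of surjectivity.

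For the forward claim, most of the work is already recorded in the paragraph preceding Definition \ref{definition:Z-inter}. First I would note that $I^{o}(u,x)v\in W_3\db{x}$: for homogeneous $u\in W_1(i)$ and $v\in W_2(j)$ the grading relation \eqref{eq:shift-N-grading} forces the coefficient of $x^{-k-1}$ to lie in $W_3(i+j-k-1)$, which vanishes unless $k\leq i+j-1$, so the exponents are bounded below. Condition (1) of Definition \ref{definition:Z-inter} is then the grading \eqref{eq:shift-N-grading}, which is exactly \eqref{eq:N-shift} for $\Phi=I^{o}$. For condition (2) I would multiply the Borcherds identity \eqref{eq:inter-borcherds} for $I$ through by $x_2^{s}$; since $x_2^{s}$ commutes with $Y(a,x_1)$ and with the formal delta series, and since $x_2^{s}I(Y(a,x_0)u,x_2)=I^{o}(Y(a,x_0)u,x_2)$, the result is the identical delta-function identity with $I$ replaced by $I^{o}$, which by the standard arguments cited after Definition \ref{definition:Z-inter} is equivalent to the Borcherds identity \eqref{eq:borcherds-id-N}, i.e. condition (2). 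Hence $I^{o}\in I_{\Z}\binom{W_3}{W_1\ W_2}$.

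For surjectivity, given $\Phi\in I_{\Z}\binom{W_3}{W_1\ W_2}$ I set $I(\ ,x)=x^{-s}\Phi(\ ,x)$ and verify the three conditions of Definition \ref{definition:intertwining-operator}. The expansion $I(u,x)v=\sum_{n\in\Z}\Phi(u;n)v\,x^{-n-1-s}$ lands in $W_3\{x\}$ in the form \eqref{eq:inter-form} with modes supported on $\alpha\in s+\Z$; the truncation \eqref{eq:inter-truncation} is precisely the statement that $\Phi(u,x)v\in W_3\db{x}$ read off these exponents, and the Borcherds identity \eqref{eq:inter-borcherds} for $I$ follows from condition (2) for $\Phi$ (equivalently \eqref{eq:borcherds-id-N}) by multiplying the delta-function form through by $x_2^{-s}$, reversing the computation of the previous paragraph. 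The main obstacle is condition (3), the $L(-1)$-derivative property \eqref{eq:inter-derivative}, which is deliberately absent from the definition of a $\Z$-graded intertwining operator; this is exactly where the hypothesis that the $W_i$ are ordinary $V$-modules, so that $L(0)$ acts semisimply with eigenvalue $\lambda_i+j$ on $W_i(j)$, must be used.

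To treat condition (3) I would establish the equivalent form \eqref{eq:relation-L(0)} by a direct comparison of coefficients on homogeneous vectors. For $u\in W_1(i)$ and $v\in W_2(j)$, the grading \eqref{eq:N-shift} gives $L(0)\Phi(u;n)v=(\lambda_3+i+j-n-1)\Phi(u;n)v$, while $L(0)u=(\lambda_1+i)u$ and $L(0)v=(\lambda_2+j)v$; subtracting the latter two contributions leaves the scalar $(\lambda_3-\lambda_1-\lambda_2-n-1)=-(n+1+s)$ on the $x^{-n-1-s}$ term, which is exactly the coefficient produced by applying $x\,\frac{d}{dx}$ to $I(u,x)v$. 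Thus \eqref{eq:relation-L(0)} holds, and since $I$ has already been shown to satisfy the Borcherds identity, relation \eqref{eq:relation-borcherds-L(0)} lets me pass from \eqref{eq:relation-L(0)} to the $L(-1)$-derivative property \eqref{eq:inter-derivative}. This computation uses nothing beyond condition (1) of Definition \ref{definition:Z-inter} and the semisimplicity of $L(0)$, which makes transparent why the $L(-1)$-derivative property cannot survive for general $\N$-graded weak modules and must be omitted from Definition \ref{definition:Z-inter}. Having verified all three conditions, $x^{-s}\Phi$ is an intertwining operator; since $I\mapsto x^{s}I$ and $\Phi\mapsto x^{-s}\Phi$ are mutually inverse, the map \eqref{eq:one-one-inter} is a linear isomorphism.
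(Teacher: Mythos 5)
Your proof is correct and follows essentially the same route as the paper: both directions are multiplication by $x^{\pm(\lambda_1+\lambda_2-\lambda_3)}$, the forward direction reuses the discussion preceding Definition \ref{definition:Z-inter}, and the $L(-1)$-derivative property for the inverse is recovered from the $\omega$-specialization of the Borcherds identity together with the grading \eqref{eq:N-shift} and semisimplicity of $L(0)$. The only cosmetic difference is order of steps --- the paper derives \eqref{eq:relation-N-borcherds-L(0)} for $\Phi$ and then invokes \eqref{eq:N-shift}, whereas you verify \eqref{eq:relation-L(0)} by coefficient comparison first and then pass through \eqref{eq:relation-borcherds-L(0)} --- which amounts to the same computation.
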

\begin{proof}
We may assume that
$W_1$, $W_2$, and $W_3$ admit decompositions as in \eqref{eq:v-module-decomposition}.
We have already shown before Definition \ref{definition:Z-inter} that
$I^{o}(\ ,x)$ is a $\Z$-graded intertwining operator for an intertwining operator $I(\ ,x)$.
For a $\Z$-graded intertwining operator
 $\Phi(\ , x) : 
W_1\otimes_{\C}W_2\rightarrow W_3\db{x}$,
$x^{-\lambda_1-\lambda_2+\lambda_3}\Phi(\ ,x)$ satisfies  
\eqref{eq:inter-truncation} and \eqref{eq:inter-borcherds}. The same argument as in
\eqref{eq:relation-borcherds-L(0)} shows 
\begin{align}
\label{eq:relation-N-borcherds-L(0)}
\Phi(L(-1)\ewo;i+1)
&=L(0)\Phi(\ewo;i)-\Phi(L(0)\ewo;i)-\Phi(\ewo;i)L(0)
\end{align}
for $\ewo\in W_1$ and $i\in\Z$, which implies
\begin{align}
\dfrac{d}{dx}(x^{-\lambda_1-\lambda_2+\lambda_3}\Phi(u ,x)) =x^{-\lambda_1-\lambda_2+\lambda_3}\Phi(L(-1)u,x) 
\end{align}
by \eqref{eq:N-shift}. Therefore
$x^{-\lambda_1-\lambda_2+\lambda_3}\Phi(u ,x)$
is an intertwining operator and this completes the proof.
\end{proof}
As we will see later in Proposition \ref{relation-log-N}, the isomorphism \eqref{eq:one-one-inter}
 is generalized to 
the case of logarithmic intertwining operators introduced in \cite{M1}.

We note that the $L(-1)$-derivative property \eqref{eq:inter-derivative}, or equivalently \eqref{eq:relation-L(0)},
is not required for $\Z$-graded intertwining operators. 
However, the following modifications of $\Z$-graded intertwining operators
satisfy \eqref{eq:relation-L(0)}.
We redefine $x^{L(0)}$ and $x^{-L(0)}$ to be two formal variables
and let $\C x^{L(0)}$ (resp. $\C x^{-L(0)}$) be a vector space with a basis $x^{L(0)}$
(resp. $x^{-L(0)}$). 
For a $L(0)$-module $W$,
we define vector spaces
\begin{align}
x^{L(0)}W&=\C x^{L(0)}\otimes_{\C}W,\nonumber\\
x^{-L(0)}W&=\C x^{-L(0)}\otimes_{\C}W\end{align}
and a linear map
\begin{align}
\label{eq;def-xd/dx}
x\dfrac{d}{dx} :\ & x^{\pm L(0)}W\rightarrow x^{\pm L(0)}W\nonumber\\
&x^{\pm L(0)}\otimes u\mapsto x^{\pm L(0)}\otimes(\pm L(0)u),\quad u\in W.
\end{align}
If $W$ is a weak $V$-module, then so are $x^{\pm L(0)}W$ by defining
\begin{align}
Y_{x^{\pm L(0)}W}(a,y)(x^{\pm L(0)}\otimes u)&=x^{\pm L(0)}\otimes Y_{W}(a,y)u
\end{align}
for $a\in V$ and $u\in W$.
Clearly $x^{\pm L(0)}W$ are isomorphic to $W$.
For three $L(0)$-modules $W_i,i=1,2,3$ and a linear map 
$f: x^{-L(0)}W_1\otimes_{\C}x^{-L(0)}W_2\rightarrow x^{L(0)}W_3$, we define
a map
\begin{align}
x\dfrac{d}{dx} f : x^{-L(0)}W_1\otimes_{\C}x^{-L(0)}W_2\rightarrow x^{L(0)}W_3
\end{align}
by 
\begin{align}
\label{eq;def-hom-xd/dx}
&(x\dfrac{d}{dx}f)(p\otimes q)\nonumber\\
&=x\frac{d}{dx}(f(p\otimes q))+
f((x\frac{d}{dx}p)\otimes q)+f(p\otimes x\frac{d}{dx}q)
\end{align}
for $p\in x^{-L(0)}W_1$ and $q\in x^{-L(0)}W_2$.
For three $\N$-graded weak $V$-modules $W_1, W_2$, and $W_3$,
a $\Z$-graded intertwining operator $\Phi : W_1\otimes_{\C}W_2\rightarrow W_3\db{x}$, and $i\in\Z$, we define
a map 
\begin{align}
\label{eq:modify-inter}
\hat{\Phi}_{i} : x^{-L(0)}W_1\otimes_{\C}x^{-L(0)}W_2&\rightarrow x^{L(0)}W_3\nonumber\\
(x^{-L(0)}\otimes u)\otimes (x^{-L(0)}\otimes v)&\mapsto x^{L(0)}\otimes \Phi(u;i)v.
\end{align}
Then the sequence $(\hat{\Phi}_i)_{i\in\Z}$ satisfies \eqref{eq:borcherds-id-N} and
\begin{align}
\hat{\Phi}_k((x^{-L(0)}\otimes u) \otimes (x^{-L(0)}\otimes v))\in x^{L(0)}W_3(i+j-k)
\end{align}
for $k\in\Z$, $u\in W_1(i)$, and $v\in W_2(j)$, which is an analogue of 
\eqref{eq:N-shift}.
By \eqref{eq;def-hom-xd/dx}, we automatically have  
the following analogue of the $L(-1)$-derivative property 
\eqref{eq:inter-derivative} (or \eqref{eq:relation-L(0)}).
\begin{lemma}
For $i\in\Z$, $\ewo\in W_1$, and $\ewt\in W_2$, we have
\begin{align}
(x\dfrac{d}{dx}\hat{\Phi}_{i})(\ewo\otimes \ewt)&
=
L(0)\hat{\Phi}_{i}(\ewo\otimes \ewt)-
\hat{\Phi}_{i}(L(0)\ewo\otimes \ewt)-
\hat{\Phi}_{i}(\ewo\otimes L(0)\ewt).
\end{align}
\end{lemma}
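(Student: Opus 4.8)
The plan is to verify the identity by a direct computation, unwinding the three definitions \eqref{eq;def-xd/dx}, \eqref{eq;def-hom-xd/dx}, and \eqref{eq:modify-inter} and keeping careful track of signs. Throughout I identify $\ewo\in W_1$ with $x^{-L(0)}\otimes\ewo\in x^{-L(0)}W_1$, and likewise $\ewt$ with $x^{-L(0)}\otimes\ewt$, so that \eqref{eq:modify-inter} reads $\hat{\Phi}_i(\ewo\otimes\ewt)=x^{L(0)}\otimes\Phi(\ewo;i)\ewt$; this identification is consistent because the weak $V$-module structure on $x^{\pm L(0)}W$ makes $L(0)$ commute with the tensor factor.

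First I would expand the left-hand side using \eqref{eq;def-hom-xd/dx} with $f=\hat{\Phi}_i$, which splits it into three terms: the ``internal'' term $x\frac{d}{dx}\big(\hat{\Phi}_i(\ewo\otimes\ewt)\big)$, together with the two terms $\hat{\Phi}_i\big((x\frac{d}{dx}\ewo)\otimes\ewt\big)$ and $\hat{\Phi}_i\big(\ewo\otimes x\frac{d}{dx}\ewt\big)$ in which the derivation hits one of the tensor factors. Then I would evaluate each of the three terms using \eqref{eq:modify-inter} and the sign-sensitive rule \eqref{eq;def-xd/dx}. The internal term lives in $x^{L(0)}W_3$, where \eqref{eq;def-xd/dx} applies $x\frac{d}{dx}$ with a plus sign, so it equals $x^{L(0)}\otimes L(0)\Phi(\ewo;i)\ewt=L(0)\hat{\Phi}_i(\ewo\otimes\ewt)$. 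By contrast, the two factor terms involve the action of $x\frac{d}{dx}$ on $x^{-L(0)}W_1$ and $x^{-L(0)}W_2$, where \eqref{eq;def-xd/dx} carries a minus sign; hence $x\frac{d}{dx}\ewo=x^{-L(0)}\otimes(-L(0)\ewo)$, and feeding this into $\hat{\Phi}_i$ produces $-x^{L(0)}\otimes\Phi(L(0)\ewo;i)\ewt=-\hat{\Phi}_i(L(0)\ewo\otimes\ewt)$, and symmetrically $-\hat{\Phi}_i(\ewo\otimes L(0)\ewt)$ from the remaining factor.

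Summing the three evaluated terms yields exactly the right-hand side, which completes the proof. There is no genuine obstacle here: the statement is a formal consequence of the definitions, and the only point requiring any care is the opposite sign between the target space $x^{L(0)}W_3$ and the source spaces $x^{-L(0)}W_1$ and $x^{-L(0)}W_2$ under \eqref{eq;def-xd/dx}. This sign discrepancy is precisely what turns the single $L(0)$ acting on the target into the two subtracted $L(0)$-terms acting on the sources, reproducing the analogue of the $L(-1)$-derivative property \eqref{eq:relation-L(0)}.
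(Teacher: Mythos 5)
Your proof is correct and is exactly the computation the paper has in mind: the paper states the lemma as an immediate consequence of \eqref{eq;def-hom-xd/dx}, and your unwinding of \eqref{eq;def-xd/dx}, \eqref{eq;def-hom-xd/dx}, and \eqref{eq:modify-inter} -- with the plus sign on the target $x^{L(0)}W_3$ and the minus signs on the sources $x^{-L(0)}W_1$, $x^{-L(0)}W_2$ -- supplies precisely the omitted verification. Your identification of $\ewo$ with $x^{-L(0)}\otimes\ewo$ is also the intended reading of the lemma's statement, and it is legitimate since $L(0)$ acts only on the second tensor factor of $x^{\pm L(0)}W$.
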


\section{\label{section:log}A relation between logarithmic intertwining operators and 
$\Z$-graded intertwining operators}
In this section we will show 
that for logarithmic modules, a $\Z$-graded intertwining operator
is essentially the same as a logarithmic intertwining operator introduced in \cite{M1}.
Throughout this section we assume all weak $V$-modules $M$ satisfy 
the following condition: there exists $\lambda\in\C$ such that
$M$ admits a decomposition
\begin{align}
\label{eq:finite-module-1}
&M=\bigoplus_{i=0}^{\infty}M_{\lambda+i},\nonumber\\
&\quad
M_{h}=\{u\in M\ |\ (L(0)-h)^{n}u=0\mbox{ for some $n\in\Z_{>0}$}\}\nonumber\\
&\quad\mbox{ with }
\dim_{\C}M_{h}<\infty\mbox{ for $h\in \lambda+\Z$.}  
\end{align}
Finite direct sums of weak $V$-modules satisfying the condition above are called 
{\em logarithmic} $V$-modules in \cite{M1}. 
We recall the definition of logarithmic intertwining operators from 
\cite[Definition 1.3]{M1} and \cite[Definition 3.7]{HLZ}.
\begin{definition}
\label{definition:log}
Let $W_i=\oplus_{j=0}^{\infty}(W_i)_{\lambda_i+j},i=1,2,3$ be three weak $V$-modules which satisfy
\eqref{eq:finite-module-1}.
A {\em logarithmic intertwining operator} is a linear map 
\begin{align}
\label{eq:log-form}
I(\ , x) : W_1\otimes_{\C}W_2&\rightarrow W_3[\log x]\{x\}\nonumber\\
I(\ewo, x)\ewt&=\sum_{\alpha\in\C}\sum_{n=0}^{\infty}\ewo_{\alpha,n}\ewt x^{-\alpha-1}(\log x)^{n},\nonumber\\
&\ewo\in W_1,\ewt\in W_2, \mbox{ and }\ewo_{\alpha,n}\in \Hom_{\C}(W_2,W_3)
\end{align}
such that the following conditions are satisfied:
\begin{enumerate}
\item
For $\ewo\in W_1,\ewt\in W_2$, and $\alpha\in\C$,
\begin{align}
\label{eq:truncation}
\ewo_{\alpha+m,k}\ewt&=0\mbox{ for $m\in\N$ sufficiently large, independently of $k$.}
\end{align}

\item
For $\ewo\in W_1$ and $a\in V$,
\begin{align}
\label{eq:log-borchrerds}
&x_0^{-1}\delta(\dfrac{x_1-x_2}{x_0})Y(a,x_1)I(\ewo,x_2)-
x_0^{-1}\delta(\dfrac{x_2-x_1}{-x_0})I(\ewo,x_2)Y(a,x_1)\nonumber\\
&=x_1^{-1}\delta(\dfrac{x_2+x_0}{x_1})I(Y(a,x_0)\ewo,x_2).
\end{align}
\item
For $\ewo\in W_1$,
\begin{align}
\label{eq:derivative-property}
I(L(-1)\ewo,x)&=\dfrac{d}{dx}I(\ewo,x).
\end{align}\end{enumerate}
\end{definition}
We denote by $I_{\log}\binom{W_3}{W_1\ W_2}$
the space of all logarithmic intertwining operators of type $\binom{W_3}{W_1\ W_2}$.
We recall some basic properties about logarithmic intertwining operators from \cite{HLZ},\cite{M1}, and \cite{M2}.
The same argument as in \eqref{eq:restriction-index}
shows
\begin{align}
\ewo_{\alpha,n}\ewt=0 \mbox{ for }\alpha\not\in \lambda_1+\lambda_2-\lambda_3+\Z.
\end{align}
For a logarithmic intertwining operator $I(\ ,x) : W_1\otimes_{\C}W_2\rightarrow W_3[\log x]\{x\}$,
we write 
\begin{align}
I(\ ,x)&=\sum_{i=0}^{\infty}I^{(i)}(\ ,x)(\log x)^i,&I^{(i)}(\ ,x) : W_1\otimes_{\C}W_2\rightarrow W_3\{x\}
\end{align}
and define
\begin{align}
I^{o}(\ ,x)&=\sum_{i\in\Z}I^{o}(\ ;i) x^{-i-1},\ 
I^{o}(\ ;i)\in \End_{\C}(W_1\otimes_{\C}W_2,W_3)\nonumber\\
&=x^{\lambda_1+\lambda_2-\lambda_3}I^{(0)}(\ ,x)
\in (\End_{\C}(W_2,W_3))[[x,x^{-1}]].
\end{align}
Note that
\begin{align}
I^{o}(\ewo;i)&=\ewo_{i+\lambda_1+\lambda_2-\lambda_3,0}
\end{align}
for $\ewo\in W_1$ and $i\in\Z$.
It follows from \eqref{eq:derivative-property} that
\begin{align}
\label{eq:inductive-inter}
xI^{(i)}(L(-1)\ewo ,x)&=x\dfrac{d}{dx}I^{(i)}(\ewo ,x)+
(i+1)I^{(i+1)}(\ewo ,x)
\end{align}
for all $i\in\N$ and therefore $I(\ ,x)$ is uniquely determined by $I^{(0)}(\ ,x)$,
or $I^{o}(\ ,x)$.

For a weak $V$-module $M=\bigoplus_{i=0}^{\infty}M_{\lambda+i}$ as in \eqref{eq:finite-module-1}, 
$M$ is an $\N$-graded weak $V$-module with $M(i)=M_{\lambda+i}$ for $i\in\N$ and one can take 
the Jordan decomposition 
\begin{align}
\label{eq:jordan}
L(0)&=S+N
\end{align}
of $L(0)$ on $M$ where $S$ is the semisimple part of $L(0)$
and $N$ is the nilpotent part of $L(0)$.
For $u\in M$ such that $Su=\lambda u$, $\lambda\in\C$,
we define 
\begin{align}
x^{S}u&=x^{\lambda}u.
\end{align}
We extend $x^{S}$ for an arbitrary $u\in M$ by linearity.
For $u\in M$ we define
\begin{align}
\label{eq:jordan-power}
x^{N}u&=e^{N\log x}u=\sum_{i=0}^{\infty}\dfrac{(\log x)^{i}N^i}{i!}u\mbox{\quad and}\nonumber\\
x^{L(0)}u&=x^{S}x^{N}u.
\end{align} 
We also define $x^{-L(0)}u$ by the same manner.
For $u\in M$ we clearly have
\begin{align}
\label{eq:derivation-xL0}
x\dfrac{d}{dx}x^{\pm L(0)}u&=x^{\pm L(0)}(\pm L(0))u.
\end{align} 
Although the following generalization of \eqref{eq:conj-L(0)-re}
seems to be well known,
we give a proof.
\begin{lemma}
\label{lemma:log-0}
Let $W_i,i=1,2,3$ be as above.
For a logarithmic intertwining operator $I(\ ,x) : W_1\otimes_{\C}W_2\rightarrow W_3[\log x]\{x\}$,
we have
\begin{align}
\label{eq:l0-int}
I(u,x)v&=\sum_{i\in\Z}x^{L(0)}I^{o}(x^{-L(0)}u;i)x^{-L(0)}v
\end{align}
for $u\in W_1$ and $v\in W_2$, where the actions of $x^{\pm L(0)}$ on $W_i,i=1,2,3$
are defined by \eqref{eq:jordan-power}.
\end{lemma}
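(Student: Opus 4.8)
The plan is to conjugate $I(\cdot,x)$ by the formal operators $x^{\pm L(0)}$ of \eqref{eq:jordan-power} and to show that the result no longer depends on $x$; reading off this $x$-independent operator then produces exactly the modes of $I^{o}$, which is the content of \eqref{eq:l0-int}. When the nilpotent part $N$ of $L(0)$ vanishes this collapses to \eqref{eq:conj-L(0)-re}, so the entire difficulty is to keep track of the logarithmic terms produced by the $x^{\pm N}$. As a preliminary step I would establish the logarithmic analogue of \eqref{eq:relation-L(0)},
\[
x\frac{d}{dx}I(u,x)=L(0)I(u,x)-I(L(0)u,x)-I(u,x)L(0)\qquad(u\in W_1),
\]
where $\frac{d}{dx}$ differentiates $\log x$ as well. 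As in the derivation of \eqref{eq:relation-borcherds-L(0)}, putting $a=\omega$ in \eqref{eq:log-borchrerds} and comparing the coefficients of $x_0^{-1}x_1^{-2}x_2^{-\alpha-1}(\log x_2)^{n}$ gives $(L(-1)u)_{\alpha+1,n}=L(0)u_{\alpha,n}-(L(0)u)_{\alpha,n}-u_{\alpha,n}L(0)$, the logarithmic index $n$ being preserved because $L(0)$ acts on $W_2,W_3$ and touches neither $x_2$ nor $\log x_2$. Writing the $L(-1)$-derivative property \eqref{eq:derivative-property} in coefficients as $(L(-1)u)_{\alpha+1,n}=-(\alpha+1)u_{\alpha,n}+(n+1)u_{\alpha,n+1}$ and substituting, the displayed identity follows after comparison with $x\frac{d}{dx}I(u,x)$.

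Next, using this identity, the rule $x\frac{d}{dx}x^{\pm L(0)}=x^{\pm L(0)}(\pm L(0))$ of \eqref{eq:derivation-xL0}, and the commutation of $L(0)$ with $x^{\pm L(0)}$ (both built from the commuting semisimple and nilpotent parts of $L(0)$), I would compute $x\frac{d}{dx}$ of
\[
\Psi(u,x):=x^{-L(0)}I(x^{L(0)}u,x)x^{L(0)},
\]
the outer $x^{-L(0)}$ acting on $W_3$, the inner $x^{L(0)}$ on $W_2$, and $x^{L(0)}u$ formed on $W_1$. In the product-rule expansion the $W_3$- and $W_2$-contributions of $L(0)$ in the displayed identity cancel against $x\frac{d}{dx}$ of the outer $x^{-L(0)}$ and of the inner $x^{L(0)}$, while the remaining term $-I(L(0)(\cdot),x)$ is cancelled by $I(x\frac{d}{dx}(x^{L(0)}u),x)=I(x^{L(0)}L(0)u,x)$; hence $x\frac{d}{dx}\Psi(u,x)=0$. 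Since an expression with only finitely many powers of $\log x$ that is annihilated by $x\frac{d}{dx}$ must be constant (downward induction on the degree in $\log x$ forces every nonconstant coefficient to vanish), $\Psi(u,x)$ is independent of $x$. Replacing $u$ by $x^{-L(0)}u$ then yields $I(u,x)=x^{L(0)}\Psi(x^{-L(0)}u)x^{-L(0)}$ with $\Psi(w)\in\Hom_{\C}(W_2,\overline{W_3})$, where $\overline{W_3}=\prod_{j\geq 0}(W_3)_{\lambda_3+j}$.

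Finally I would compute the constant $\Psi(w)$. For $w\in(W_1)_{\lambda_1+p}$ and $v\in(W_2)_{\lambda_2+q}$, expanding $x^{L(0)}w$ and $x^{L(0)}v$ by \eqref{eq:jordan-power} and using that each mode $w_{\alpha,n}v$ lies in $(W_3)_{\lambda_1+\lambda_2+p+q-\alpha-1}$ (the logarithmic form of \eqref{eq:shift-grading}), all explicit powers of $x$ cancel, leaving a polynomial in $\log x$ whose constant term survives only when every contribution from $x^{\pm N}$ and every intrinsic logarithmic index is zero; this term equals $\sum_{\alpha}w_{\alpha,0}v$. By $I^{o}(w;i)=w_{i+\lambda_1+\lambda_2-\lambda_3,0}$ this is $\sum_{i}I^{o}(w;i)v$, which by \eqref{eq:truncation} has one homogeneous component in each degree and so defines an element of $\overline{W_3}$. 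Substituting $\Psi(x^{-L(0)}u)=\sum_{i}I^{o}(x^{-L(0)}u;i)$ into $I(u,x)=x^{L(0)}\Psi(x^{-L(0)}u)x^{-L(0)}$ and pulling $x^{L(0)}$ through the sum, which redistributes the summands of $\overline{W_3}$ to distinct powers of $x$ and hence returns to $W_3[\log x]\{x\}$, gives \eqref{eq:l0-int}.

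I expect the main obstacle to be the second step. Verifying that $\Psi(u,x)$ is genuinely killed by $x\frac{d}{dx}$ requires the full logarithmic identity together with the exact commutation of $L(0)$ with $x^{\pm L(0)}$, and one must be careful that ``$x\frac{d}{dx}f=0$ implies $f$ constant'' holds only because the degree in $\log x$ is finite. The minor but real bookkeeping point is that $x^{-L(0)}$ does not preserve $W_3[\log x]\{x\}$, since a single power of $x$ in $\Psi(u,x)$ collects infinitely many homogeneous components, so the argument must be run in the completion $\overline{W_3}$, with the original grading recovered only after the final conjugation by $x^{L(0)}$.
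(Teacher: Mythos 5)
Your argument is correct, but it runs in the opposite direction from the paper's. The paper defines $J(u,x)v$ to be the right-hand side of \eqref{eq:l0-int} and verifies that $J$ is itself a logarithmic intertwining operator: truncation comes from the expansion \eqref{eq:inter-expand}, the Jacobi identity \eqref{eq:log-borchrerds} from the fact that the nilpotent part $N$ is a $V$-module endomorphism (\cite[Proposition 2.2]{HLZ}), and the $L(-1)$-derivative property from \eqref{eq:derivation-xL0} together with the mode identity of type \eqref{eq:relation-borcherds-L(0)} applied to $I^{o}$; the equality $I=J$ then follows because $J^{(0)}=I^{(0)}$ and, by the recursion \eqref{eq:inductive-inter}, a logarithmic intertwining operator is determined by its $(\log x)^{0}$-part. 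You instead conjugate $I$ itself, prove $x\frac{d}{dx}\Psi=0$ via the merged identity $x\frac{d}{dx}I(u,x)=L(0)I(u,x)-I(L(0)u,x)-I(u,x)L(0)$ (your derivation of this from \eqref{eq:log-borchrerds} and \eqref{eq:derivative-property} is sound and is the logarithmic version of the computation giving \eqref{eq:relation-borcherds-L(0)}), and then identify the $x$-independent operator with the $n=0$ modes; your constancy step plays exactly the role that the uniqueness recursion \eqref{eq:inductive-inter} plays in the paper. The trade-offs: the paper's route never leaves $W_3[\log x]\{x\}$ and needs no completion, but it must check all three axioms for $J$ and must quote \cite[Proposition 2.2]{HLZ}; your route avoids re-verifying the Jacobi identity and explains conceptually why the formula holds ($x^{\pm L(0)}$-conjugation trivializes the $x$-dependence), at the cost of working in $\overline{W_3}=\prod_{j\geq 0}(W_3)_{\lambda_3+j}$ and of invoking the logarithmic grading-shift property $u_{\alpha,n}(W_2)_{\lambda_2+q}\subset (W_3)_{\lambda_1+\lambda_2+p+q-\alpha-1}$, which you assert without proof. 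That property is a standard fact recalled from \cite{HLZ}, \cite{M1}, \cite{M2}, and the paper itself uses it implicitly when it writes the single power $x^{\lambda_3-\lambda_1-\lambda_2-i-1}$ in \eqref{eq:inter-expand}, so the reliance is on the same footing. One small simplification: once all powers of $x$ have cancelled in $\Psi(u,x)$, the implication that $x\frac{d}{dx}f=0$ forces $f$ constant holds even for an infinite series in $\log x$, since the coefficient of $(\log x)^{m}$ in the derivative is $(m+1)f_{m+1}$; the finiteness of the degree in $\log x$ that you flag as delicate is needed only when nonzero powers of $x$ could survive, and the grading-shift property rules that out here.
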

\begin{proof}
For $\ewo\in W_1$ and $\ewt\in W_2$, we denote by $J(u,x)v$ the right-hand side of \eqref{eq:l0-int}.
By \eqref{eq:jordan-power}, $J(u,x)v$ can be written as
\begin{align}
\label{eq:inter-expand}
J(u,x)v
&=\sum_{i\in\Z}e^{N\log x}I^{o}(e^{-N\log x}u;i)e^{-N\log x}v x^{\lambda_3-\lambda_1-\lambda_2-i-1}\nonumber\\
&\in W_3[\log x]\{x\}.
\end{align}
and therefore $J(u,x)v$ satisfies \eqref{eq:truncation}. 

Since $N$ is a $V$-module homomorphism by \cite[Proposition 2.2]{HLZ}, $J(\ ,x)$ satisfies \eqref{eq:log-borchrerds}.
By \eqref{eq:derivation-xL0}, we have \eqref{eq:derivative-property} as follows:
\begin{align}
&x\dfrac{d}{dx}J(u,x)v\nonumber\\
&=\sum_{i\in\Z}\big(
x^{L(0)}L(0)I^{o}(x^{-L(0)}u;i)x^{-L(0)}v-x^{L(0)}I^{o}(x^{-L(0)}L(0)u;i)x^{-L(0)}v\nonumber\\
&\quad{}-x^{L(0)}I^{o}(x^{-L(0)}u;i)x^{-L(0)}L(0)v\big)\nonumber\\
&=\sum_{i\in\Z}x^{L(0)}I^{o}(x^{-L(0)}L(-1)u;i+1)x^{-L(0)}v.
\end{align}
Thus, $J(\ ,x)$ is a logarithmic intertwining operator.
If we wright
\begin{align}
J(\ ,x)&=\sum_{i=0}^{\infty}J^{(i)}(\ ,x)(\log x)^i,& J^{(i)}(\ ,x) : W_1\otimes_{\C}W_2\rightarrow W_3\{x\},
\end{align}
then we have $J^{(0)}(\ ,x)=I^{(0)}(\ ,x)$ by \eqref{eq:inter-expand}
and therefore $I(\ ,x)=J(\ ,x)$ by the comment right after
\eqref{eq:inductive-inter}.
\end{proof}

For a logarithmic intertwining operator $I(\ ,x) : W_1\otimes_{\C}W_2\rightarrow W_3[\log x]\{x\}$,
$I^{o}(\ ,x) : W_1\otimes_{\C}W_2\rightarrow W_3\db{x}$ is a $\Z$-graded intertwining operator
since $I^{o}(\ ,x)$ satisfies \eqref{eq:inter-borcherds}.
Conversely, the proof of Lemma \ref{lemma:log-0} shows that
for a $\Z$-graded intertwining operator $\Phi(\ ,x) : W_1\otimes_{\C}W_2\rightarrow W_3\db{x}$,
the map 
\begin{align}
W_1\otimes_{\C}W_2&\rightarrow W_3[\log x]\{x\}\nonumber\\
\ewo\otimes\ewt& \mapsto \sum_{i\in\Z}x^{L(0)}\Phi(x^{-L(0)}\ewo;i)x^{-L(0)}\ewt
\end{align}
is a logarithmic intertwining operator.
Thus we  have the following result.
\begin{proposition}
\label{relation-log-N}
Let $W_1, W_2$, and $W_3$ be three weak $V$-modules which satisfy
\eqref{eq:finite-module-1}. Then,
the map 
\begin{align}
\label{eq:one-to-one-log-N}
I_{\log}\binom{W_3}{W_1\ W_2}&\rightarrow I_{\Z}\binom{W_3}{W_1\ W_2}\nonumber\\
I(\ ,x)&\mapsto I^{o}(\ ,x)
\end{align}
is a linear isomorphism.
\end{proposition}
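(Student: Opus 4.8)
The plan is to produce an explicit two-sided inverse of the map \eqref{eq:one-to-one-log-N} out of the two constructions already assembled above. The assignment $I(\ ,x)\mapsto I^{o}(\ ,x)$ is evidently linear, and the remark immediately preceding the proposition shows that it lands in $I_{\Z}\binom{W_3}{W_1\ W_2}$. For the candidate inverse I would send a $\Z$-graded intertwining operator $\Phi(\ ,x)$ to
\begin{align*}
J(u,x)v&=\sum_{i\in\Z}x^{L(0)}\Phi(x^{-L(0)}u;i)x^{-L(0)}v,
\end{align*}
which is a logarithmic intertwining operator by the same computation that proves Lemma \ref{lemma:log-0}. It then remains only to verify that these two linear maps compose to the identity in both orders.

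One composition is free of charge. Starting from a logarithmic intertwining operator $I(\ ,x)$, forming $I^{o}(\ ,x)$, and then building the associated $J$ recovers $I$ itself, because the identity $I(u,x)v=\sum_{i\in\Z}x^{L(0)}I^{o}(x^{-L(0)}u;i)x^{-L(0)}v$ is precisely the statement of Lemma \ref{lemma:log-0}.

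For the reverse composition I would begin with $\Phi$, form $J$ as above, and check that $J^{o}=\Phi$. Expanding $x^{\pm L(0)}=x^{\pm S}e^{\pm N\log x}$ as in \eqref{eq:jordan-power}, the only factors in $J$ that carry $\log x$ are the three exponentials $e^{N\log x}$ and $e^{-N\log x}$; since $\Phi$ itself involves no logarithm, the $(\log x)^0$-coefficient of $J$ is obtained by replacing each exponential with the identity, giving
\begin{align*}
J^{(0)}(u,x)v&=\sum_{i\in\Z}x^{S}\Phi(x^{-S}u;i)x^{-S}v.
\end{align*}
Taking $u\in (W_1)_{\lambda_1+p}$ and $v\in (W_2)_{\lambda_2+q}$ homogeneous and invoking the grading shift \eqref{eq:N-shift} to place $\Phi(u;i)v$ in $(W_3)_{\lambda_3+p+q-i-1}$, the three semisimple factors contribute the single scalar power $x^{\lambda_3-\lambda_1-\lambda_2-i-1}$. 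Hence $J^{o}(u,x)v=x^{\lambda_1+\lambda_2-\lambda_3}J^{(0)}(u,x)v=\sum_{i\in\Z}\Phi(u;i)v\,x^{-i-1}=\Phi(u,x)v$, as desired.

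The only genuine bookkeeping, and the step I expect to be the main obstacle, is the degree count in the last paragraph: one must confirm that the exponents produced by $x^{S}$ and the two copies of $x^{-S}$ cancel exactly against the shift dictated by \eqref{eq:N-shift}, leaving only the power $x^{-i-1}$ and no stray factor of $x$. Granting this, linearity together with the two preceding constructions finishes the proof.
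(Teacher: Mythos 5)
Your proposal is correct and takes essentially the same route as the paper: the paper likewise inverts the map by sending $\Phi$ to $J(u,x)v=\sum_{i\in\Z}x^{L(0)}\Phi(x^{-L(0)}u;i)x^{-L(0)}v$, appealing to the proof of Lemma \ref{lemma:log-0} to see that $J$ is a logarithmic intertwining operator and that forming $J$ from $I^{o}$ recovers $I$. The only difference is expository: the paper leaves the verification $J^{o}=\Phi$ implicit, whereas you carry out the $(\log x)^{0}$-extraction and the degree count via \eqref{eq:N-shift} explicitly, and that count (the semisimple factors contributing exactly $x^{\lambda_3-\lambda_1-\lambda_2-i-1}$, cancelled by $x^{\lambda_1+\lambda_2-\lambda_3}$) is exactly right.
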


\section{\label{section:correspondence}The main theorem}

Throughout this section $\lwt$ and $\lws$ are two left $A(V)$-modules and  
$W_1$ is an $\N$-graded weak $V$-module.
In this section we establish a one-to-one correspondence between 
$\Hom_{A(V)}(A(W_1)\otimes_{A(V)} \lwt, \lws)$ and
$I_{\Z}\binom{S(\lws^{*})^{\prime}}{W_1\ S(\lwt)}$
as a generalization of \cite[Theorem 1.5.3]{FZ} and \cite[Theorem 2.11]{Li}.
Here for a left $A(V)$-module $U$, 
$S(U)=\oplus_{j=0}^{\infty}S(U)(j)$ is the generalized Verma $V$-module
with $S(U)(0)=U$ 
defined in Section \ref{section:preliminary}, $U^{*}=\Hom_{\C}(U,\C)$, and $S(U)^{\prime}=\oplus_{j=0}^{\infty}\Hom_{\C}(S(U)(j),\C)$.
We will show this result by modifying the proofs of
\cite[Theorem 6.6]{HY}, \cite[Theorem 2.11]{Li}, and \cite[Theorem 2.2.1]{Z} so as not to  use the $L(-1)$- derivative property.

For a vector space $U$, $T(U)$ denotes the tensor algebra of $U$.
For an $\N$-graded weak $V$-module $W_1$,
$T(W_1,\lwt)$ denotes the tensor algebra $T((V\oplus W_1\oplus \lwt)[t,t^{-1}])$ 
and $\free(W_1,\lwt)$ denotes
the subspace $T(V[t,t^{-1}])\otimes_{\C} W_1[t,t^{-1}]\otimes_{\C}T(V[t,t^{-1}])\otimes_{\C}\lwt$ of 
$T(W_1,\lwt)$.
For simplicity we shall omit the tensor product symbol.
For $a\in V\oplus W_1\oplus \lwt$ and $i\in\Z$, $a(i)$ denotes $a\otimes t^{i}$.
For $a\in V\oplus W_1\oplus \lwt$, we define a map
\begin{align}
Y_{T(W_1,\lwt)}(a,x) : T(W_1,\lwt)& \rightarrow T(W_1,\lwt)\nonumber\\
u& \mapsto \sum_{i\in\Z}a(i)ux^{-i-1}.
\end{align}
We note that for $a\in V$
\begin{align}
Y_{T(W_1,\lwt)}(a,x)(T(V[t,t^{-1}])\otimes_{\C}\lwt)&\subset  (T(V[t,t^{-1}])\otimes_{\C}\lwt)[[x,x^{-1}]],\nonumber\\
Y_{T(W_1,\lwt)}(a,x)(F(W_1,\lwt))&\subset F(W_1,\lwt)[[x,x^{-1}]]
\end{align}
and for $u\in W_1$
\begin{align}
Y_{T(W_1,\lwt)}(u,x)(T(V[t,t^{-1}])\otimes_{\C}\lwt)&\subset  F(W_1,\lwt)[[x,x^{-1}]].
\end{align}
For homogeneous
 $a^1,\ldots,a^{n-2}\in V$,
homogeneous $\ewo\in W_1$,  $\ewt\in \lwt$,  
$m_1,\ldots,m_{n-2},i\in\Z$, and $s\in\{1,\ldots,n-2\}$,
we define the {\em degree} of 
\begin{align}
\label{eqn:element}
H=a^1(m_1)\cdots a^{s}(m_s)u(i)a^{s+1}(m_{s+1})\cdots a^{n-2}(m_{n-2})v\in F(W_1,\lwt)
\end{align}
by
\begin{align}
\label{eqn:deg-element}
\deg H&=\sum_{j=1}^{n-2}(\wt a_{j}-m_{j}-1)+(\deg u-i-1).
\end{align}
For $n\in\Z$, we denote by $\free(W_1,\lwt)(n)$ the set of all elements in $\free(W_1,\lwt)$ with degree $n$.
Then, we have
\begin{align}
\free(W_1,\lwt)&=\bigoplus_{n\in\Z}\free(W_1,\lwt)(n).
\end{align}

\begin{definition}
\label{definition:ideal}
Let $\ideal(W_1,\lwt)$ be the subspace of $\free(W_1,\lwt)$ generated by the following elements:
\begin{enumerate}
\item
all elements in $\oplus_{n<0}\free(W_1,\lwt)(n)$.
\item For homogeneous $a,b\in V$, $\wpp\in T(V[t,t^{-1}])$,
homogeneous $\wq\in F(W_1,\lwt)$, and $l,m,n\in\Z$,
\begin{align}
\label{eq:borcherds-pre}
&\wpp\Big(\sum\limits_{i=0}^{\infty}
\binom{m}{i}(a_{l+i}b){(m+n-i)}\nonumber\\
& -\sum\limits_{i=0}^{\wt b-n-1+\deg \wq}
(-1)^i\binom{l}{i}
a{(m+l-i)}b{(n+i)}\nonumber\\
&\quad{}-(-1)^{l+1}\sum\limits_{i=0}^{\wt a-m-1+\deg \wq}(-1)^i\binom{l}{i}b{(n+l-i)}a{(m+i)}\Big)\wq.
\end{align}
\item
For $\wpp\in T(V[t,t^{-1}])$,
$\wq\in F(W_1,\lwt)$, and $n\in\Z$,
\begin{align}
\label{eqn:1(n)-}
\wpp(\1(n)-\delta_{n,-1})\wq.
\end{align}
\item
For homogeneous $a\in V$, $v\in \lwt$, and $\wpp\in T(V[t,t^{-1}])\otimes_{\C}W_1[t,t^{-1}]\otimes_{\C}T(V[t,t^{-1}])$,
\begin{align}
\wpp(a(\wt a-1)-o(a))v.
\end{align}
\item
For homogeneous $a,b\in V$, homogeneous $q\in T(V[t,t^{-1}])\otimes \lwt$, $\wpp\in T(V[t,t^{-1}])\otimes_{\C}W_1[t,t^{-1}]\otimes_{\C}T(V[t,t^{-1}])$, and $l,m,n\in\Z$,
\begin{align}
&\wpp\Big(\sum\limits_{i=0}^{\infty}
\binom{m}{i}(a_{l+i}b){(m+n-i)}\nonumber\\
& -\sum\limits_{i=0}^{\wt b-n-1+\deg q}
(-1)^i\binom{l}{i}
a{(m+l-i)}b{(n+i)}\nonumber\\
&\quad{}-(-1)^{l+1}\sum\limits_{i=0}^{\wt a-m-1+\deg q}(-1)^i\binom{l}{i}b{(n+l-i)}a{(m+i)}\Big)\wq.
\end{align}
\item
For $\wpp\in T(V[t,t^{-1}])\otimes_{\C}W_1[t,t^{-1}]\otimes_{\C}T(V[t,t^{-1}])$,
$\wq\in T(V[t,t^{-1}])\otimes \lwt$, and $n\in\Z$,
\begin{align}
\wpp(\1(n)-\delta_{n,-1})\wq.
\end{align}
\item
For homogeneous $a\in V$, homogeneous $u\in W_1$, homogeneous $\wq\in T(V[t,t^{-1}])\otimes \lwt$,
$\wpp\in T(V[t,t^{-1}])$, and $l,m,n\in\Z$,
\begin{align}
&\wpp\Big(\sum\limits_{i=0}^{\infty}
\binom{m}{i}(a_{l+i}\ewo){(m+n-i)}\nonumber\\
& -\sum\limits_{i=0}^{\deg u-n-1+\deg q}
(-1)^i\binom{l}{i}
a{(m+l-i)}\ewo{(n+i)}\nonumber\\
&\quad{}-(-1)^{l+1}\sum\limits_{i=0}^{\wt a-m-1+\deg q}(-1)^i\binom{l}{i}\ewo{(n+l-i)}a{(m+i)}\Big)q.
\end{align}
\end{enumerate}
\end{definition}
Since $\ideal(W_1,\lwt)$ is generated by homogeneous elements, we have
\begin{align}
\ideal(W_1,\lwt)&=\bigoplus_{n\in\Z}(\ideal(W_1,\lwt)\cap \free(W_1,\lwt)(n)).
\end{align}
We set
\begin{align}
\label{eq:definition-S}
S(W_1,\lwt)&=\free(W_1,\lwt)/\ideal (W_1,\lwt)
\end{align}
and 
\begin{align}
S(W_1,\lwt)(n)&=\free(W_1,\lwt)(n)+\ideal (W_1,\lwt)
\end{align}
for $n\in\Z$. We have $S(W_1,\lwt)(n)=0$ for $n<0$ by Definition \ref{definition:ideal} (1) and
\begin{align}
S(W_1,\lwt)&=\bigoplus_{n=0}^{\infty}S(W_1,\lwt)(n).
\end{align}
We shall use elements of $T(W_1,\lwt)$ to represent elements of $S(W_1,\lwt)$.
For $a\in V$, $Y_{S(W_1,\lwt)}(a,x)$ denotes
the map $S(W_1,\lwt)\rightarrow S(W_1,\lwt)\db{x}$ induced by $Y_{T(W_1,\lwt)}(\ ,x)$, namely
\begin{align}
Y_{S(W_1,\lwt)}(a,x) : S(W_1,\lwt)&\rightarrow S(W_1,\lwt)\db{x}\nonumber\\
\ewo&\mapsto \sum_{i\in\Z}a(i)\ewo x^{-i-1}.
\end{align}
By definition, $S(W_1,\lwt)$ is an $\N$-graded weak $V$-module.

For a vector space $U$, we define
\begin{align}
\label{eq:Uwx-wx}
{U}_{\{\wy_1,\ldots,\wy_n\}}&=U[[\wy_i-\wy_j\ |\ 1\leq i<j\leq n]][(\wy_i-\wy_j)^{-1}\ |\ 1\leq i<j\leq n].
\end{align}
For distinct $i,j\in\{1,\ldots,n\}$, let
\begin{align}
\label{eq:iotaij}
\iota_{(i,j)} : {U}_{\{\wy_1,\ldots,\wy_n\}}&\rightarrow 
{U}_{\{\wy_1,\ldots,\widehat{\wy_{i}},\ldots,\wy_n\}}\db{\wy_{i}-\wy_{j}}
\end{align}
be a linear map, where $\widehat{y_i}$ denotes the omission of the term $y_i$,
defined by $\iota_{(i,j)}(u)=u$ for $u\in U$ and
\begin{align}
\iota_{(i,j)} (\wy_{k}-\wy_{l})^{m}&
=
\left\{
\begin{array}{ll}
(\wy_{k}-\wk_{l})^{m},&\mbox{if }k,l\neq i,\\
\sum_{s=0}\limits^{\infty}\dbinom{m}{s}(\wy_{j}-\wy_{l})^{m-s}(\wy_{i}-\wy_{j})^{s}
,&\mbox{if }k=i, \mbox{ and }\\
\sum_{s=0}\limits^{\infty}\dbinom{m}{s}(\wy_{k}-\wy_{j})^{m-s}(-\wy_{i}+\wy_{j})^{s}
,&\mbox{if }l=i
\end{array}
\right.
\end{align} 
for distinct $k,l\in\{1,\ldots,n\}$ and $m\in\Z$. 
For $i_1,j_1,i_2,j_2\in\{1,\ldots,n\}$ such that $i_1\neq j_1$, $i_2\neq j_2$, and $j_1\neq i_2$,
we  define a map 
\begin{align}
\iota_{(i_1,j_1),(i_2,j_2)} : &\  U_{\{y_1,\ldots,y_n\}}\rightarrow
 U_{\{y_1,\ldots,\widehat{y_{i_1}},\ldots,\widehat{y_{i_2}},\ldots,y_n\}}\db{y_{i_1}-y_{j_1}}\db{y_{i_2}-y_{j_2}}
\end{align}
as follows: for $f\in U_{\{y_1,\ldots,y_n\}}$, writing
\begin{align}
\iota_{(i_2,j_2)}f&
=\sum_{k\in\Z}f_{k}(y_{i_2}-y_{j_2})^{k},
\ f_k\in U_{\{y_1,\ldots,\widehat{y_{i_2}},\ldots,y_n\}}\nonumber\\
&\in 
 U_{\{y_1,\ldots,\widehat{y_{i_2}},\ldots,y_n\}}\db{y_{i_2}-y_{j_2}},
\end{align}
we define 
\begin{align}
\label{eqn:iota-i-2-j-2}
\iota_{(i_1,j_1),(i_2,j_2)}f&=\sum_{k\in\Z}(\iota_{(i_1,j_1)}f_{k})(y_{i_2}-y_{j_2})^{k}\nonumber\\
&\in U_{\{y_1,\ldots,\widehat{y_{i_1}},\ldots,\widehat{y_{i_2}},\ldots,y_n\}}\db{y_{i_1}-y_{j_1}}\db{y_{i_2}-y_{j_2}}.
\end{align}
By the same manner we inductively define a map
\begin{align}
\label{eqn:iota-i-k-j-k}
\iota_{(i_1,j_1),\ldots,(i_k,j_k)} : &\  U_{\{y_1,\ldots,y_n\}}\rightarrow
 U_{\{y_1,\ldots,\widehat{y_{i_1}},\ldots,\widehat{y_{i_k}},\ldots,y_n\}}\db{y_{i_1}-y_{j_1}}\cdots \db{y_{i_k}-y_{j_k}}
\end{align}
for $i_1,j_1,\ldots,i_k,j_k\in \{1,\ldots,n\}$ such that
$i_m\neq j_m$ and $j_{m}\not\in\{i_{m+1},\ldots,i_{k}\}$ for all $m=1,\ldots,k$.
We note that for distinct $i_1,\ldots,i_n\in\{1,\ldots,n\}$
 $\iota_{(i_1,j_1),(i_2,j_2),\ldots,(i_{n},j_{n})}$ and 
$\iota_{(i_2,j_2),\ldots,(i_{n},j_{n})}$ are the same maps on $U_{\{y_1,\ldots,y_n\}}$ by definition.

Let $U$ be a vector space over $\C$ and $h\in\Z$.
We say $p\in U[[(y_i-y_j)^{\pm 1}\ |\ 1\leq i<j\leq n]]$
is {\it homogeneous of total degree} $h$ 
if all the terms appearing
in it with nonzero coefficients have the same total degree $h$.
We note that for every distinct $i,j\in\{1,\ldots,n\}$,
$p\in U_{\{y_1,\ldots,y_n\}}$ is homogeneous of total degree $h$
if and only if so is $\iota_{(i,j)}p$.

We write 
\begin{align}
\label{eq:lw-v}
V&=\bigoplus_{i=\Delta}^{\infty}V_i
\end{align}
where $V_i=\{a\in V\ |\ L(0)a=ia\}$.
Let $M$ be a weak $V$-module. For homogeneous $a^1,\ldots,a^{n-1}\in V$ and homogeneous $u\in M$,
standard arguments (cf. \cite[Sections 3.2--3.4]{LL} and \cite[Lemma 2.4]{T2}) show that 
there exists 
\begin{align}
\label{eq:hatY-bound}
&\hat{Y}_{M}(a^1,\ldots,a^{n-1},u|\wy_1,\ldots,\wy_n)\nonumber\\
&\in 
\prod_{1\leq s<t\leq n-1}(y_s-y_t)^{-\wt a^s-\wt a^t+\Delta}
\prod_{1\leq m< n}(y_m-y_n)^{-\wt a^m-\deg u}\nonumber\\
&\quad{}\times M[[y_i-y_j\ |\ 1\leq i<j\leq n]]\nonumber\\
&\subset {M}_{\{\wy_1,\ldots,\wy_n\}}
\end{align}
such that
\begin{align}
\label{eq:a-expansion}
&\iota_{(1,n),(2,n)\ldots,(n-1,n)}\hat{Y}_{M}(a^1,a^2,\ldots,a^{n-1},u|\wy_1,\ldots,\wy_n)\nonumber\\
&=Y_{M}(a^1,y_1-y_n)Y_{M}(a^2,y_2-y_n)\cdots Y_{M}(a^{n-1},y_{n-1}-y_n)u\nonumber\\
&\in M\db{y_1-y_n}\db{y_2-y_n}\cdots\db{y_{n-1}-y_n}.
\end{align}
We note that
\begin{align}
\hat{Y}_{M}(a,u|\wy_1,\wy_2)&=Y_{M}(a,y_1-y_2)u
\end{align}
for $a\in V$ and $u\in M$. Standard arguments (cf. \cite[Sections 3.2--3.4]{LL} and \cite[Lemma 2.4]{T2}) also show that
\begin{align}
\label{eq:a-permutation}
&\hat{Y}_{M}(a^{\sigma(1)},\ldots,a^{\sigma(n-1)},u|\wy_{\sigma(1)},\ldots,\wy_{\sigma(n-1)},\wy_n)\nonumber\\
&=\hat{Y}_{M}(a^1,\ldots,a^{n-1},u|\wy_1,\ldots,\wy_n)
\end{align}
for an arbitrary permutation $\sigma$ of $\{1,\ldots,n-1\}$,
\begin{align}
&\iota_{(i,i+1)}\hat{Y}_{M}(a^1,\ldots,a^{n-1},u|\wy_1,\ldots,\wy_n)\nonumber\\
&=\hat{Y}_{M}(a^1,\ldots,a^{i-1},Y(a^{i},\wy_{i}-\wy_{i+1})a^{i+1},a^{i+2},\ldots,a^{n-1},u|\wy_1,
\ldots,\widehat{\wy_{i}},\ldots,\wy_n)\nonumber\\
&\in 
M_{\{y_1,\ldots,\widehat{\wy_{i}},\ldots,y_{n}\}}\db{\wy_i-\wy_{i+1}}
\end{align}
for $i=1,\ldots,n-2$, and
\begin{align}
&\iota_{(n-1,n)}\hat{Y}_{M}(a^1,\ldots,a^{n-1},u|\wy_1,\ldots,\wy_n)\nonumber\\
&=\hat{Y}_{M}(a^1,\ldots,a^{n-2},Y_{M}(a^{n-1},\wy_{n-1}-\wy_{n})u|\wy_1,
\ldots,\wy_{n-2},\wy_n)\nonumber\\
&\in 
M_{\{y_1,\ldots,y_{n-2},y_{n}\}}
\db{\wy_{n-1}-\wy_{n}}.
\end{align}

For $n\in\N$, $V^{\times n}$ denotes the $n$ times direct product of $V$.
Let 
\begin{align}
f_{n} : & V^{\times n-2}\times W_1\times \lwt
\rightarrow U_{\{y_1,\ldots,y_{n}\}},\ n=2,3,\ldots
\end{align}
be a sequence of maps which satisfies the following conditions:
let $a^1,\ldots,a^{n-2}\in V$, $u\in W_1$, and $v\in\lwt$.
\begin{enumerate}
\item
For an arbitrary permutation $\sigma$ of $\{1,\ldots,n-2\}$,
\begin{align}
\label{eq:permutation}
&f_{n}(a^1,\ldots,a^{n-2},u,v| y_1,\ldots,\wy_{n-2},\wy_{n-1},\wy_{n})\nonumber\\
&=
f_{n}(a^{\sigma(1)},\ldots,a^{\sigma(n-2)},u,v| y_{\sigma(1)},\ldots,
\wy_{\sigma(n-2)},\wy_{n-1},\wy_{n}).
\end{align}
\item For $i=1,\ldots,n-3$, 
\begin{align}
&\iota_{(i,i+1)}f_{n}(a^1,\ldots,a^{n-2},u,v| y_1,\ldots,\wy_{n})\nonumber\\
&=f_{n-1}(a^1,\ldots,Y(a^{i},\wy_i-\wy_{i+1})a^{i+1},\ldots,a^{n-2},u,v| y_1,\ldots,\widehat{\wy_{i}},
\ldots,\wy_{n})\nonumber\\
&\in {U}_{\{\wy_1,\ldots,\widehat{\wy_{i}},\ldots,\wy_{n}\}}\db{\wy_{i}-\wy_{i+1}}.
\end{align}
\item 
\begin{align}
&\iota_{(n-2,n-1)}f_{n}(a^1,\ldots,a^{n-3},a^{n-2},u,v| y_1,\ldots,\wy_{n})\nonumber\\
&=f_{n-1}(a^1,\ldots,a^{n-3},Y_{W_1}(a^{n-2},\wy_{n-2}-\wy_{n-1})u,v| y_1,\ldots,\wy_{n-3},
\wy_{n-1},\wy_{n})\nonumber\\
&\in {U}_{\{\wy_1,\ldots,\wy_{n-3},\wy_{n-1},\wy_{n}\}}\db{\wy_{n-2}-\wy_{n-1}}.
\end{align}
\item 
If $a^{n-2}$ is homogeneous, then the coefficient of $(\wy_{n-2}-\wy_{n})^{-\wt a^{n-2}}$ in
\begin{align*}
&\iota_{(n-2,n)}f_{n}(a^1,\ldots,a^{n-3},a^{n-2},u,v| y_1,\ldots,\wy_{n})
\end{align*}
is equal to 
\begin{align}
\label{eq:a-v}
f_{n-1}(a^1,\ldots,a^{n-3},u,o(a^{n-2})v| y_1,\ldots,
\wy_{n-3},\wy_{n-1},\wy_{n}).
\end{align}
\end{enumerate}
We define a map $\Phi : F(W_1,\lwt)\rightarrow U$ by
\begin{align}
\label{eq:definition-expand}
&\Phi(Y_{T(W_1,\lwt)}(a^1,\wy_1-\wy_{n})\cdots
Y_{T(W_1,\lwt)}(a^s,\wy_{s}-\wy_{n})\nonumber\\
&\qquad{}\times Y_{T(W_1,\lwt)}(u,\wy_{n-1}-\wy_{n})\nonumber\\
&\qquad{}\times Y_{T(W_1,\lwt)}(a^{s+1},\wy_{s+1}-\wy_{n})\cdots Y_{T(W_1,\lwt)}(a^{n-2},\wy_{n-2}-\wy_{n})
v)\nonumber\\
&=\iota_{(1,n),\ldots,(s,n),(n-1,n),(s+1,n),\ldots,(n-2,n)}
f_n(a^1,\ldots,a^{n-2},u,v| y_1,\ldots,y_{n})
\end{align} 
for $a^1,\ldots,a^{n-2}\in V, \ewo\in W_1$, $\ewt\in \lwt$, and $s=1,\ldots,n-2$.
\begin{lemma}
\label{lemma:rational-function}
With the notation above,
$\Phi(J(W_1,\lwt))=0$ and therefore
 the map $\Phi : T(W_1,\lwt) \rightarrow U$ 
induces a map $S(W_1,\lwt) \rightarrow U$ which denoted by the same symbol:
\begin{align}
\Phi:\quad S(W_1,\lwt)& \rightarrow U\nonumber\\
u& \mapsto \Phi(u).
\end{align}
\end{lemma}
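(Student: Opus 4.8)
The plan is to check directly that the linear map $\Phi$, defined on $\free(W_1,\lwt)$ by \eqref{eq:definition-expand} (and well defined there since the standard monomials form a basis, so coefficient extraction is unambiguous), annihilates each of the seven families of generators of $\ideal(W_1,\lwt)$ listed in Definition \ref{definition:ideal}. Since these families span $\ideal(W_1,\lwt)$, this yields $\Phi(\ideal(W_1,\lwt))=0$ and hence the induced map on $S(W_1,\lwt)=\free(W_1,\lwt)/\ideal(W_1,\lwt)$. The organizing principle is that $\Phi$ is assembled from the genuine elements $f_n\in U_{\{y_1,\ldots,y_n\}}$, so every relation that merely expresses the equality of two different $\iota$-expansions of one and the same $f_n$ is automatically respected. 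The families Definition \ref{definition:ideal} (2)--(7) are all of this type, whereas Definition \ref{definition:ideal} (1) is a pole-order (equivalently, grading) statement about the $f_n$ themselves.

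I would first dispose of the families that follow immediately from the defining properties \eqref{eq:permutation}--\eqref{eq:a-v} of the sequence $(f_n)$. For the vacuum families Definition \ref{definition:ideal} (3) and (6), I insert $\1$ in one slot and use $Y(\1,y)=\id$ together with the permutation symmetry \eqref{eq:permutation} and the iterate conditions on the $f_n$ to bring $\1$ into a position where its operator product with a neighbour collapses; this forces $f_n$ to be independent of the corresponding variable, so that extracting coefficients in $y_j-y_n$ leaves only the power $(y_j-y_n)^0$, which is exactly the mode $\1(-1)$. Hence $\Phi$ sends $\1(n)$ to $\delta_{n,-1}$ times the remaining insertion, and both families map to $0$. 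For the zero-mode family Definition \ref{definition:ideal} (4), I place $a$ in the slot adjacent to $v$ and apply \eqref{eq:a-v}: the coefficient of $(y_{n-2}-y_n)^{-\wt a}$, which is precisely the image under $\Phi$ of the mode $a(\wt a-1)$ acting on $v$, equals the insertion $o(a)v$, so that $\Phi\big(\wpp(a(\wt a-1)-o(a))v\big)=0$.

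Next comes the grading family Definition \ref{definition:ideal} (1). Rewriting \eqref{eqn:deg-element} in terms of total $y$-degree shows that $\Phi$ annihilates $\oplus_{n<0}\free(W_1,\lwt)(n)$ if and only if the total pole order of $f_n$ in the variables $y_k-y_n$ is at most $\sum_j\wt a^j+\deg u$, exactly as in the bound \eqref{eq:hatY-bound} for $\hY_M$. I would establish this bound by induction on the number of $V$-insertions: the iterate conditions on the $f_n$ collapse two neighbouring fields through $Y(a^i,y_i-y_{i+1})a^{i+1}$ and $Y_{W_1}(a^{n-2},y_{n-2}-y_{n-1})u$, while \eqref{eq:a-v} controls the pole order at $y_n$. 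Each reduction lowers the number of insertions and respects the weight bookkeeping, and the base cases are immediate.

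The crux, and the step I expect to be the main obstacle, is the three Borcherds families Definition \ref{definition:ideal} (2), (5), and (7). For each I would freeze every insertion except the pair being commuted and reduce to a two-variable rational function $g(y_a,y_b)$ extracted from the relevant $f_n$, whose only singularities lie along $y_a=y_b$, $y_a=y_n$, and $y_b=y_n$. The three groups of terms in \eqref{eq:borcherds-pre} are then exactly the images under $\Phi$ of the three $\iota$-expansions of $g$, namely the iterated product $Y(a,\cdot)Y(b,\cdot)$, the opposite product $Y(b,\cdot)Y(a,\cdot)$, and the operator product $Y(Y(a,y_a-y_b)b,\cdot)$; the alternating-sum shape of the Borcherds identity is precisely the compatibility of these expansions recorded by the composition rules \eqref{eqn:iota-i-2-j-2}--\eqref{eqn:iota-i-k-j-k} for the maps $\iota_{(i,j)}$ together with \eqref{eq:a-permutation}. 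The delicate point is the finite upper limits $\wt b-n-1+\deg\wq$ and $\wt a-m-1+\deg\wq$ in the sums: I must show that extending them to $\infty$ changes nothing after applying $\Phi$, because the omitted terms pair with coefficients of $g$ of total degree below the threshold forced by the pole bound established for Definition \ref{definition:ideal} (1), and hence vanish. Granting this truncation, the three expansions of the single function $g$ cancel in the prescribed combination and $\Phi$ annihilates Definition \ref{definition:ideal} (2). Families (5) and (7) are handled by the same argument; the only difference is that one of the commuted objects now lies in the $\lwt$-sector or is the $W_1$-insertion $u$, which merely changes whether the governing pole bounds and the relevant operator product come from \eqref{eq:a-v}, from the $W_1$-iterate condition, or from the $V$-$V$ operator product, leaving the structure of the cancellation intact.
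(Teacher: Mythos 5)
Your overall architecture coincides with the paper's: the paper also checks the ideal generator by generator, derives the Borcherds families from the three $\iota$-compositions applied to the single element $f_n$ (its display \eqref{eq:3-composition} is exactly your ``freeze the other insertions and compare the iterated product, the opposite product, and the operator product'' step, finished by the standard formal-calculus argument), obtains the vacuum families from $Y(\1,y)a^{i+1}=a^{i+1}$ together with injectivity of $\iota_{(i,i+1)}$, and handles the zero-mode family through \eqref{eq:a-v}; in fact the paper writes out only families (2) and (3) of Definition \ref{definition:ideal} and declares the others analogous. Up to that point your proposal is a faithful, even more detailed, version of the paper's proof.

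The genuine gap is your treatment of family (1), on which your truncation step for families (2), (5), (7) also leans. You propose to \emph{derive} the pole-order bound (``total pole order of $f_n$ at most $\sum_j \wt a^j+\deg u$'') by induction from the conditions \eqref{eq:permutation}--\eqref{eq:a-v}. No such derivation can exist, because those conditions are stable under replacing the whole sequence by $\tilde{f}_n=(y_{n-1}-y_n)^{-N}f_n$ for a fixed $N>0$: the factor involves none of $y_1,\ldots,y_{n-2}$, so it is symmetric under permutations, passes through $\iota_{(i,i+1)}$ and $\iota_{(n-2,n-1)}$ in the iterate conditions, and passes through the coefficient extraction in \eqref{eq:a-v}, while the last two variables of $f_{n-1}$ are again $y_{n-1},y_n$; yet $\tilde{f}$ violates every bound of this shape. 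Concretely, if $f_2(u,v|y_1,y_2)=\varphi(u\otimes v)(y_1-y_2)^{-\deg u}$, then for $\tilde{f}$ the induced map sends $u(\deg u-1+N)v$ to $\varphi(u\otimes v)\neq 0$, although this element has degree $-N<0$ and lies in the ideal. Relatedly, your ``base cases are immediate'' is not true: for $n=2$ the hypotheses place no constraint at all on the pole order of $f_2$, and \eqref{eq:a-v} pins down only the single coefficient of $(y_{n-2}-y_n)^{-\wt a^{n-2}}$, saying nothing about the others. In the paper the bound is not deduced from the abstract conditions; it is verified for the concrete $f_n=\varphi(a^1,\ldots,a^{n-2},u,v|y_1,\ldots,y_n)$ inside the proof of Theorem \ref{theorem:correspondence}, where \eqref{eq:hatY-bound}, \eqref{eq:leq-2}, and the residue estimates \eqref{eq:res-non-0}--\eqref{eq:i-n-bound} show that $g$ is a Laurent polynomial homogeneous of total degree $\Gamma$, hence that $\varphi(a^1,\ldots,u,v|\cdot)$ is homogeneous of total degree $-\sum_i\wt a^i-\deg u$; since each $\iota_{(i,j)}$ preserves total degree (as noted before \eqref{eq:lw-v}), this homogeneity is what annihilates all elements of nonzero degree --- in particular family (1) --- and what legitimizes replacing the truncated sums in families (2), (5), (7) by the full Borcherds sums. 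To repair your write-up, take homogeneity (or the pole bound) as a standing hypothesis on the sequence $(f_n)$, supplied by the application, rather than attempting to prove it from \eqref{eq:permutation}--\eqref{eq:a-v}.
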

\begin{proof}
We simply write $Y=Y_{T(W_1,\lwt)}$.
We only show that the images of elements of the forms \eqref{eq:borcherds-pre} and \eqref{eqn:1(n)-} in Definition \ref{definition:ideal}
vanish. We can show the images of the other elements in Definition \ref{definition:ideal} vanish in the same manner.

For $i,s=1,\ldots,n-3$ with $i+1<s$, defining
\begin{align}
P&=\Yu (a^1,y_1-y_n)\cdots \Yu (a^{i-1},y_{i-1}-y_{n}),\nonumber\\
Q&=
\Yu (a^{i+2},y_{i+2}-y_n)\cdots 
\Yu (a^{s},y_{s}-y_n) \Yu (u,y_{n-1}-y_n)\nonumber\\
&\quad{}\times \Yu (a^{s+1},y_{s+1}-y_n)\cdots \Yu (a^{n-2},y_{n-2}-y_{n})v,\nonumber\\
\iota_1&=\iota_{(1,n),\ldots,(i-1,n)},\mbox{ and }\nonumber\\
\iota_2&=\iota_{(i+2,n),\ldots,(s,n),(n-1,n),(s+1,n),\ldots,(n-2,n)},
\end{align}
we have
\begin{align}
\label{eq:3-composition}
&\iota_1\circ \iota_{(i,n),(i+1,n)}\circ \iota_2(f_{n}(a^1,\ldots,a^{n-2},u,v| y_1,\ldots,y_{n}))\nonumber\\
&=f_{2}(P \Yu (a^i,y_i-y_{n})\Yu (a^{i+1},y_{i+1}-y_{n})Q),\nonumber\\
&\iota_1\circ \iota_{(i+1,n),(i,n)}\circ \iota_2(f_n(a^1,\ldots,a^{n-2},u,v| y_1,\ldots,y_{n})\nonumber\\
&=f_{2}(P\Yu (a^{i+1},y_{i+1}-y_{n})\Yu (a^i,y_i-y_{n})Q),\quad\mbox{and}\nonumber\\
&\iota_1\circ \iota_{(i+1,n),(i,i+1)}\circ \iota_2(f_n(a^1,\ldots,a^{n-2},u,v| y_1,\ldots,y_{n})\nonumber\\
&=f_{2}(P\Yu (Y(a^i,y_i-y_{i+1})a^{i+1},y_{i+1}-y_{n})Q).
\end{align}
Let $p$ be an arbitrary coefficient of $P$ and 
$q$ an arbitrary coefficient of $Q$. 
Taking $a^i=a$ and $a^{i+1}=b$ in \eqref{eq:3-composition},
we have \eqref{eq:borcherds-pre} by 
standard arguments (cf. \cite[Sections 3.2--3.4]{LL} and \cite[Lemma 2.4]{T2}).

For $i=1,\ldots,n-3$, we have
\begin{align}
\label{eq:unit-v}
&\iota_{(i,i+1)}(f_{n}(\ldots,a^{i-1},\1,a^{i+1},\ldots | y_1,\ldots,y_{n}))\nonumber\\
&=f_{n-1}(\ldots,a^{i-1},\Yu(\Yu (\1,y_i-y_{i+1})a^{i+1},\ldots | y_1,\ldots,\widehat{\wy_{i}},\ldots,y_n)\nonumber\\
&=f_{n-1}(\ldots,a^{i-1},a^{i+1},\ldots | y_1,\ldots,\widehat{\wy_{i}},\ldots,y_n)\nonumber\\
&\in T(W_1,\lwt)_{\{y_1,\ldots,\widehat{\wy_{i}},\ldots,y_{n}\}}.
\end{align}
Since $T(W_1,\lwt)_{\{y_1,\ldots,\widehat{\wy_{i}},\ldots,y_{n}\}}
\subset T(W_1,\lwt)_{\{y_1,\ldots,y_{n}\}}$ and $\iota_{(i,i+1)}$ is injective, we have
\begin{align}
\label{eq:(n-1)-n}
&f_{n}(\ldots,a^{i-1},\1,a^{i+1},\ldots | y_1,\ldots,y_{n})\nonumber\\
&=f_{n-1}(\ldots,a^{i-1},a^{i+1},\ldots | y_1,\ldots,\widehat{\wy_{i}},\ldots,y_n)
\end{align}
and therefore \eqref{eqn:1(n)-} by \eqref{eq:definition-expand}.
\end{proof}

For $\Phi(\ ,x)\in \inter\binom{W_3}{W_1\ W_2}$ and homogeneous $u\in W_1$,
we denote $\Phi(u;\deg u-1)$ by $o^{\Phi}(u)$
and extend $o^{\Phi}(u)$ for an arbitrary $u\in W_1$ by linearity. 
The map 
\begin{align}
o^{\Phi} : W_1\otimes W_2(0)&\rightarrow W_3(0)\nonumber\\
u\otimes v&\mapsto o^{\Phi}(u)v
\end{align}
induces an $A(V)$-module homomorphism $A(W_1)\otimes_{A(V)}W_2(0)\rightarrow W_3(0)$ which denoted 
by the same symbol:
\begin{align}
\label{eq:inter-zhu-2}
o^{\Phi} : A(W_1)\otimes_{A(V)}W_2(0)&\rightarrow W_3(0) \nonumber\\
u\otimes v&\mapsto o^{\Phi}(u)v.
\end{align}
We get a map
\begin{align}
\label{eq:inter-zhu}
\inter\binom{W_3}{W_1\ W_2}&\rightarrow \Hom_{A(V)}(A(W_1)\otimes_{A(V)}W_2(0),W_3(0))\nonumber\\
\Phi(\ ,x)&\mapsto o^{\Phi}.
\end{align}

The following is the main result.
\begin{theorem}\label{theorem:correspondence}
For  an $\N$-graded weak $V$-module $W_1$ and
two left $A(V)$-modules $\Omega_{(2)}$ and $\Omega_{(3)}$,
the  map
\begin{align}
\label{eq:one-one-iinter-N}
\inter\binom{S(\lws^{*})^{\prime}}{W_1\ S(\lwt)}
&\rightarrow \Hom_{A(V)}(A(W_1)\otimes_{A(V)}\lwt,\lws)\nonumber\\
\Phi(\ ,x)&\mapsto o^{\Phi}
\end{align}
is a linear isomorphism.
\end{theorem}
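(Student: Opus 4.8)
The plan is to prove that the map $\Phi(\ ,x)\mapsto o^{\Phi}$ is both injective and surjective by exploiting the two special features of the triple: $S(\lwt)$ is a generalized Verma module, hence generated by its degree-$0$ part $S(\lwt)(0)=\lwt$, while $S(\lws^{*})^{\prime}$ is a graded dual, hence controlled by its pairing against the generalized Verma module $S(\lws^{*})$, which is in turn generated by $S(\lws^{*})(0)=\lws^{*}$. Following the strategy of \cite[Theorem 2.2.1]{Z}, \cite[Theorem 2.11]{Li}, and \cite[Theorem 6.6]{HY}, but replacing every appeal to the $L(-1)$-derivative property by the grading condition \eqref{eq:N-shift} and the Borcherds identity \eqref{eq:borcherds-id-N}, I would reduce all matrix coefficients $\langle w,\Phi(u,x)v\rangle$, with $w\in S(\lws^{*})$, $u\in W_1$, $v\in S(\lwt)$, to the case $w\in\lws^{*}$, $v\in\lwt$. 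The universal $\N$-graded weak module $S(W_1,\lwt)$ of \eqref{eq:definition-S}, together with Lemma \ref{lemma:rational-function}, is exactly the bookkeeping device that makes these reductions unambiguous; I would first record that its degree-$0$ component satisfies $S(W_1,\lwt)(0)\cong A(W_1)\otimes_{A(V)}\lwt$ as $A(V)$-modules, via relations (4) and (7) of Definition \ref{definition:ideal} and the bimodule actions \eqref{eq:zhu-bimodule-left}--\eqref{eq:zhu-bimodule-right}.

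For well-definedness I would verify that $o^{\Phi}$ descends to an $A(V)$-module homomorphism. The grading \eqref{eq:N-shift} shows that $o^{\Phi}(u):\lwt\to S(\lws^{*})^{\prime}(0)$ is degree preserving, and specialising \eqref{eq:borcherds-id-N} to $a\in V$ with the mode choices used in the proof of \cite[Lemma 2.1.2]{Z} (equivalently \eqref{eq:leq-2}) shows that $o^{\Phi}$ annihilates $O(W_1)$ and intertwines the left and right bimodule actions, so that it factors through $A(W_1)\otimes_{A(V)}\lwt$. The one point demanding care is that the resulting functional on $\lws^{*}=S(\lws^{*})(0)$ is represented by a genuine element of $\lws$: here I would use that each coefficient $\Phi(u;n)v$ lies in the graded dual $S(\lws^{*})^{\prime}$ and pair through the canonical form $S(\lws^{*})^{\prime}\times S(\lws^{*})\to\C$, so that the top coefficient defines the required element of $\Hom_{A(V)}(A(W_1)\otimes_{A(V)}\lwt,\lws)$.

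Injectivity then follows from the reduction above. If $o^{\Phi}=0$, then every top coefficient $\langle w,\Phi(u;\deg u-1)v\rangle$ with $w\in\lws^{*}$ and $v\in\lwt$ vanishes; the Borcherds identity \eqref{eq:borcherds-id-N} lets me move the vertex operators that build a general $w\in S(\lws^{*})$ and a general $v\in S(\lwt)$ down onto the lowest spaces, expressing an arbitrary matrix coefficient of $\Phi$ as a finite $\C$-linear combination of top coefficients, all of which vanish. Hence $\Phi=0$.

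The hard part is surjectivity. Given $\psi\in\Hom_{A(V)}(A(W_1)\otimes_{A(V)}\lwt,\lws)$ and $w\in\lws^{*}=S(\lws^{*})(0)$, I would assemble a compatible family of correlation functions $f_n$ (in the sense of conditions (1)--(4) preceding \eqref{eq:definition-expand}, with $U=\C$) whose ``top'' datum records the pairing $\langle w,\psi(u\otimes v)\rangle$, feed them into Lemma \ref{lemma:rational-function} to obtain the matrix coefficients $\langle w,\Phi(u,x)v\rangle$ of a candidate $\Z$-graded intertwining operator $\Phi$, and check the truncation condition and the Borcherds identity of Definition \ref{definition:Z-inter} along with $o^{\Phi}=\psi$. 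The crux, and the main obstacle, is precisely the well-definedness of this construction: one must show that the family $f_n$ respects every relation generating $\ideal(W_1,\lwt)$ in Definition \ref{definition:ideal}, so that the distinct iterated expansions $\iota_{(i,j)}$ agree. This is exactly the place where the standard proofs invoke the $L(-1)$-derivative property to control the dependence on the formal variables; the whole point is to replace that input by the explicit rational-function formalism of Lemma \ref{lemma:rational-function} and the degree truncation \eqref{eq:N-shift}, and verifying that these suffice is the technical heart of the argument.
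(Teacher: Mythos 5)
Your scaffolding matches the paper's architecture --- injectivity via the argument of \cite[Proposition 2.10]{Li}, the universal object $S(W_1,\lwt)$ of \eqref{eq:definition-S}, and Lemma \ref{lemma:rational-function} as the bookkeeping device --- but your surjectivity argument has a genuine gap: you never construct the family $f_n$, and you explicitly leave its verification open (``verifying that these suffice is the technical heart of the argument''). Moreover you locate the difficulty in the wrong place. Compatibility of the $f_n$ with the relations generating $\ideal(W_1,\lwt)$ is \emph{not} the hard part: that is precisely what Lemma \ref{lemma:rational-function} delivers for free, once a family $f_n$ satisfying conditions (1)--(4) before \eqref{eq:definition-expand} exists. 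The actual heart of the paper's proof is the \emph{existence} of the $f_n$ as honest Laurent polynomials: starting from the single datum $\varphi(u,v|\wy_1,\wy_2)=\varphi(u\otimes v)(\wy_1-\wy_2)^{-\deg u}$ of \eqref{eq:varphi-first}, one forms the series $\tf$ of \eqref{eq:definition-g}, which a priori lies only in the mixed space \eqref{eq:g-in}, and proves $\tf\in\lws[(y_1-y_n)^{\pm 1},\ldots,(y_{n-1}-y_n)^{\pm 1}]$ by the residue estimate \eqref{eq:res-non-0}--\eqref{eq:i-n-bound}. That estimate rests on \eqref{eq:leq-2} (i.e., on $\varphi$ factoring through $A(W_1)$, so that modes $a^1_{k}$ with $k$ too negative act by elements of $O(W_1)$) together with the homogeneity of total degree $\Gamma$ in \eqref{eq:total-degree-g}; this is exactly the point where the $L(-1)$-derivative property is replaced, and your proposal contains no substitute for it --- the grading \eqref{eq:N-shift} alone, which you invoke, does not yield the needed lower bound on the exponents $i_1,\ldots,i_{n-2}$.

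A second omission concerns how the operator lands in $S(\lws^{*})^{\prime}$. Your plan fixes $w\in\lws^{*}$ and builds scalar-valued correlation functions ($U=\C$), which at best produces the pairings $\langle w,\Phi(u;i)v\rangle$ against the lowest space; it does not define coefficients in the graded dual $S(\lws^{*})^{\prime}$, nor give the module-theoretic input needed for condition (2) of Definition \ref{definition:Z-inter} on the output side. The paper instead applies Lemma \ref{lemma:rational-function} once, with $U=\lws$, to obtain a single map $\Phi:S(W_1,\lwt)\rightarrow\lws$; it then defines $\mu^{\prime}:\lws^{*}\rightarrow S(W_1,\lwt)^{\prime}(0)$ by $\langle\mu^{\prime}(w^{\prime}_{(3)}),u\rangle=\langle w^{\prime}_{(3)},\Phi(u)\rangle$, invokes the universal property of the generalized Verma module $S(\lws^{*})$ to extend $\mu^{\prime}$ to a $V$-module homomorphism $S(\lws^{*})\rightarrow S(W_1,\lwt)^{\prime}$, and dualizes and restricts to get $\mu:S(W_1,\lwt)\rightarrow S(\lws^{*})^{\prime}$; the desired operator is $u\otimes v\mapsto\sum_{i\in\Z}\mu(u(i)v)x^{-i-1}$, with \eqref{eq:N-shift} and the Borcherds identity \eqref{eq:borcherds-id-N} holding automatically because $\mu$ is a degree-preserving $V$-module map out of $S(W_1,\lwt)$, whose defining relations (2), (5), (7) of Definition \ref{definition:ideal} encode exactly these identities. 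Without this universality-plus-double-dual step your candidate $\Phi$ is not even defined as a map into $S(\lws^{*})^{\prime}\db{x}$, so the truncation and Borcherds checks you defer cannot be carried out as stated.
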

\begin{proof}
The same argument as in the proof of \cite[Proposition 2.10]{Li}
shows the map \eqref{eq:one-one-iinter-N} is injective.
Let $\varphi : A(W_1)\otimes_{A(V)}\lwt\rightarrow \lws$ be an $A(V)$-module homomorphism.
For homogeneous $u\in W_1$ and $v\in \lwt$, we  define 
\begin{align}
\label{eq:varphi-first}
\varphi(u,v | \wy_1,\wy_2)
&=\varphi(u\otimes v)(\wy_1-\wy_2)^{-\deg u}
\end{align}
and extend $\varphi(u,v | \wy_1,\wy_2)$ for an arbitrary $u\in W_1$ by linearity.
For $a^1,\ldots,a^{n-2}\in V$, $\ewo\in W_1$, $\ewt\in \lwt$, 
and $i,j\in\{1,\ldots,n\}$ with $i<j$,
as temporary notation let us put
\begin{align}
\label{eq:Gamma-a}
\Gamma_{ij}&=
\left\{
\begin{array}{ll}
\wt a^i+\wt a^j-\Delta,&\mbox{ if }1\leq i<j\leq n-2,\\
\wt a^i+\deg u,&\mbox{ if }1\leq i\leq n-2\mbox{ and }j=n-1,\\
\wt a^i,&\mbox{ if }1\leq i\leq n-2\mbox{ and }j=n,\\
\deg u,&\mbox{ if }i=n-1 \mbox{ and }j=n
\end{array}
\right.
\end{align}
and
\begin{align}
\label{eq:definition-g}
\tf
&=
\prod_{1\leq i<j\leq n}(y_i-y_j)^{\Gamma_{ij}}
\varphi(\hY_{W_1}(a^1,\ldots,a^{n-2},u | \wy_1,\ldots,\wy_{n-2},\wy_{n-1}),v | \wy_{n-1},\wy_{n})
\end{align}
where $\Delta$ is defined in \eqref{eq:lw-v}.
Since 
\begin{align}
\label{eq:be-power-series}
&\prod_{1\leq i<j\leq n-1}(y_i-y_j)^{\Gamma_{ij}}
\hY_{W_1}(a^1,\ldots,a^{n-2},u | \wy_1,\ldots,\wy_{n-2},\wy_{n-1})\nonumber\\
&\in W_1[[\wy_i-\wy_j\ |\ 1\leq i<j\leq n-1]]
\end{align}
by \eqref{eq:hatY-bound}, we have
\begin{align}
\label{eq:g-in}
\tf
&\in (\lws\db{\wy_{n-1}-\wy_{n}})[[\wy_i-\wy_j\ |\ 1\leq i<j\leq n-1]]\nonumber\\
&\qquad{}\times[(y_1-y_{n})^{\pm 1},\ldots,(y_{n-2}-y_{n})^{\pm 1}].
\end{align}
We note that $\tf$ is homogeneous of total degree 
\begin{align}
\label{eq:total-degree-g}
\Gamma&=\sum_{1\leq i<j\leq n}\Gamma_{ij}-\sum_{i=1}^{n-2}\wt a^i-\deg u.
\end{align}
For an arbitrary permutation $\sigma$ of $\{1,\ldots,n-2\}$,
by \eqref{eq:be-power-series}
and 
\begin{align}
&\iota_{(1,n-1),\ldots,(n-2,n-1)}
\prod_{i=1}^{n-1}(y_i-y_n)^{\Gamma_{in}}\nonumber\\
&=
\iota_{(\sigma(1),n-1),\ldots,(\sigma(n-2),n-1)}
\prod_{i=1}^{n-1}(y_i-y_n)^{\Gamma_{in}},
\end{align}
we  have
\begin{align}
\iota_{(1,n-1),\ldots,(n-2,n-1)}g
&=
\iota_{(\sigma(1),n-1),\ldots,(\sigma(n-2),n-1)}g
\end{align}
 and therefore
\begin{align}
\label{eq:1-n-2-permutation}
&\iota_{(1,n-1),\ldots,(n-2,n-1)}
\Big(\prod_{1\leq i<j\leq n}(y_i-y_j)^{\Gamma_{ij}}\nonumber\\
&\quad{}\times
\varphi(
Y_{W_1}(a^1,y_1-y_{n-1})\cdots Y_{W_1}(a^{n-2},y_{n-2}-y_{n-1})u,v|y_{n-1},y_{n})\Big)\nonumber\\
&=
\iota_{(\sigma(1),n-1),\ldots,(\sigma(n-2),n-1)}
\Big(\prod_{1\leq i<j\leq n}(y_i-y_j)^{\Gamma_{ij}}\nonumber\\
&\quad{}\times\varphi(
Y_{W_1}(a^{\sigma(1)},y_{\sigma(1)}-y_{n-1})\cdots Y_{W_1}(a^{\sigma(n-2)},y_{\sigma(n-2)}-y_{n-1})u,v|
y_{n-1},y_{n})\Big)
\end{align}
by \eqref{eq:a-expansion} and \eqref{eq:a-permutation}.

Let $i_1,\ldots,i_{n-2},i_{n}$ be a sequence of integers such that
\begin{align}
\label{eq:res-non-0}
0\neq &\Res_{y_{n-1}-y_{n}}\Res_{y_{1}-y_{n-1}}\cdots
\Res_{y_{n-2}-y_{n-1}}\nonumber\\
&\quad{}\times(y_{n-1}-y_{n})^{i_n}(y_1-y_{n-1})^{i_1}\cdots (y_{n-2}-y_{n-1})^{i_{n-1}}\nonumber\\
&\quad{}\times\iota_{(1,n-1),\ldots,(n-2,n-1)}g.
\end{align}
By \eqref{eq:total-degree-g}, we have
\begin{align}
\label{eq:total-degree-residue-g}
i_1+\cdots+i_{n-2}+i_{n}=-\Gamma-n+1.
\end{align}
By \eqref{eq:1-n-2-permutation},
we may assume that $i_1$ is the smallest element in $\{i_1,\ldots,i_{n-2}\}$.
Since
\begin{align}
&\iota_{(1,n-1),\ldots,(n-2,n-1)}
\prod_{1\leq \wi<{\wj}\leq n}(y_{\wi}-y_{\wj})^{\Gamma_{{\wi}{\wj}}}\nonumber\\
&=\sum_{k_{12},k_{13},\ldots,k_{n-3,n-2}=0}^{\infty}\sum_{k_{1n},\ldots,k_{n-2,n}=0}^{\infty}\Big(\prod_{1\leq {\wi}<{\wj}\leq n-2}
\binom{\Gamma_{{\wi}{\wj}}}{k_{{\wi}{\wj}}}\Big)\Big(\prod_{{\wi}=1}^{n-2}
\binom{\Gamma_{{\wi}n}}{k_{{\wi}n}}\Big)\nonumber\\
&\quad{}\times (y_{n-1}-y_{n})^{\sum_{m=1}^{n-1}\Gamma_{mn}-
\sum_{m=1}^{n-2}k_{mn}}\nonumber\\
&\quad{}\times \prod_{j=1}^{n-2}(-1)^{\sum_{m=1}^{j-1}k_{mj}}(y_j-y_{n-1})^{k_{jn}+\sum_{m=1}^{j-1}k_{mj}+
\sum_{m=j+1}^{n-1}\Gamma_{jm}-\sum_{m=j+1}^{n-2}k_{jm}}\nonumber\\
&=
\sum_{k_{12},k_{13},\ldots,k_{n-3,n-2}=0}^{\infty}\sum_{k_{2n},\ldots,k_{n-2,n}=0}^{\infty}\Big(\prod_{1\leq {\wi}<{\wj}\leq n-2}
\binom{\Gamma_{{\wi}{\wj}}}{k_{{\wi}{\wj}}}\Big)\Big(\prod_{{\wi}=2}^{n-2}
\binom{\Gamma_{{\wi}n}}{k_{{\wi}n}}\Big)\nonumber\\
&\quad{}\times \sum_{k_{1n}=0}^{\infty}\binom{\Gamma_{1n}}{k_{1n}}
(y_{n-1}-y_{n})^{\Gamma_{1n}-k_{1n}+\sum_{m=2}^{n-1}\Gamma_{mn}-\sum_{m=2}^{n-2}k_{mn}}\nonumber\\
&\quad{}\times 
(y_1-y_{n-1})^{k_{1n}+\sum_{m=2}^{n-1}\Gamma_{1m}-\sum_{m=2}^{n-2}k_{1m}}\nonumber\\
&\quad{}\times \prod_{j=2}^{n-2}(-1)^{\sum_{m=1}^{j-1}k_{mj}}(y_j-y_{n-1})^{k_{jn}+\sum_{m=1}^{j-1}k_{mj}+
\sum_{m=j+1}^{n-1}\Gamma_{jm}-\sum_{m=j+1}^{n-2}k_{jm}},
\end{align}
the right-hand side of \eqref{eq:res-non-0} can be written as
a linear combination of the following elements:
\begin{align}
\label{eq:deform-a-1}
&\Res_{y_{n-1}-y_{n}}\Res_{y_{1}-y_{n-1}}\sum_{k_{1n}=0}^{\infty}\binom{\Gamma_{1n}}{k_{1n}}
(y_{n-1}-y_{n})^{\Gamma_{1n}-k_{1n}+d}\nonumber\\
&\quad{}\times 
(y_1-y_{n-1})^{i_1+\sum_{\wm=2}^{n-1}\Gamma_{1\wm}-l+k_{1n}}\varphi(Y(a^1,y_1-y_{n-1})w,v|y_{n-1},y_{n})
\end{align}
where $l\in\N$, $d\in\Z$, and $w$ is a homogeneous element of $W_1$.
We see that \eqref{eq:deform-a-1} becomes
\begin{align}
\label{eq:deform-a-2}
&\Res_{y_{n-1}-y_{n}}
\sum_{k_{1n}=0}^{\infty}\binom{\Gamma_{1n}}{k_{1n}}
\varphi(a^1_{i_1+\sum_{\wm=2}^{n-1}\Gamma_{1\wm}-\wl+k_{1n}}w,v|
y_{n-1},y_{n})\nonumber\\
&\quad{}\times (y_{n-1}-y_{n})^{\Gamma_{1n}-k_{1n}+d}\nonumber\\
&=\Res_{y_{n-1}-y_{n}}
\sum_{k_{1n}=0}^{\infty}\binom{\Gamma_{1n}}{k_{1n}}\varphi(a^1_{i_1+\sum_{\wm=2}^{n-1}\Gamma_{1\wm}-\wl+k_{1n}}w\otimes v)\nonumber\\
&\quad{}\times 
(y_{n-1}-y_{n})^{-\wt a^1-\deg w+i_1+\sum_{\wm=2}^{n-1}\Gamma_{1\wm}-\wl+1+\Gamma_{1n}+d}\nonumber\\
&=\Res_{y_{n-1}-y_{n}}
\varphi(\Res_{x}(1+x)^{\Gamma_{1n}}x^{i_1+\sum_{\wm=2}^{n-1}\Gamma_{1\wm}-\wl}
Y_{W_1}(a^1,x)w\otimes v)\nonumber\\
&\quad{}\times 
(y_{n-1}-y_{n})^{-\wt a^1-\deg w+i_1+\sum_{\wn=2}^{n-1}\Gamma_{1\wm}-k+1+\Gamma_{1n}+d}.
\end{align}
By \eqref{eq:leq-2}, \eqref{eq:Gamma-a}, \eqref{eq:res-non-0}, and \eqref{eq:deform-a-2}, we have
\begin{align}
i_1\geq -\sum_{\wm=2}^{n-1}\Gamma_{1\wm}-1.
\end{align}
Since $i_1$ is the smallest element of $\{i_1,\ldots,i_{n-2}\}$, we have
\begin{align}
i_1+\cdots+i_{n-2}+i_n\geq -(n-2)(\sum_{\wm=2}^{n-1}\Gamma_{1\wm}+1)+i_{n}
\end{align}
and therefore
\begin{align}
\label{eq:i-n-bound}
i_n\leq -\Gamma-n+1+(n-2)(\sum_{\wm=2}^{n-1}\Gamma_{1\wm}+1)
\end{align}
by
\eqref{eq:total-degree-residue-g}. Thus by \eqref{eq:g-in}, we have
\begin{align}
\tf &\in \lws[[\wy_i-\wy_j\ |\ 1\leq i<j\leq n]][(y_1-y_{n})^{-1},\ldots,(y_{n-1}-y_{n})^{- 1}].
\end{align}
Since $g$ is homogeneous of total degree $\Gamma$, we have
\begin{align}
\tf &\in \lws[(y_1-y_{n})^{\pm 1},\ldots,(y_{n-1}-y_{n})^{\pm 1}]
\end{align}
and therefore there exists 
\begin{align}
\varphi(a^1,\ldots,a^{n-2},u,v| y_1,\ldots,y_{n})
\in \lws[(\wy_i-\wy_j)^{\pm 1}\ |\ 1\leq i<j\leq n]
\end{align}
such that
\begin{align}
\label{eq:pr-varphi}
&\iota_{(n-1,n)}\varphi(a^1,\ldots,a^{n-2},u,v| y_1,\ldots,y_{n})\nonumber\\
&=\varphi(\hY_{W_1}(a^1,\ldots,a^{n-2},u | \wy_1,\ldots,\wy_{n-2},\wy_{n-1}),v | \wy_{n-1},\wy_{n})
\end{align}
by the definition \eqref{eq:definition-g} of $\tf$.
If one put 
\begin{align}
&f_n(a^1,\ldots,a^{n-2},u,v|y_1,\ldots,y_n)\nonumber\\
&=\varphi(a^1,\ldots,a^{n-2},u,v| y_1,\ldots,y_{n}),\ n=2,3,\ldots
\end{align}
for $a^1,\ldots,a^{n-2}\in V, u\in W_1$ and $v\in \lwt$,
then $f_n\ (n=2,3,\ldots)$ satisfy \eqref{eq:permutation}--\eqref{eq:a-v} by \eqref{eq:pr-varphi}.
It follows from  Lemma \ref{lemma:rational-function} that
there exists a map $\Phi : S(W_1,\lwt)\rightarrow \lws$
such that
\begin{align}
&\Phi(Y_{S(W_1,\lwt)}(a^1,\wy_1-\wy_{n})\cdots,
Y_{S(W_1,\lwt)}(a^s,\wy_{s}-\wy_{n})\nonumber\\
&\qquad{}\times Y_{S(W_1,\lwt)}(u,\wy_{n-1}-\wy_{n})\nonumber\\
&\qquad{}\times Y_{S(W_1,\lwt)}(a^{s+1},\wy_{s+1}-\wy_{n})\cdots Y_{S(W_1,\lwt)}(a^{n-2},\wy_{n-2}-\wy_{n})
v)\nonumber\\
&=\iota_{(1,n),\ldots,(s,n),(n-1,n),(s+1,n),\ldots,(n-2,n)}
\varphi(a^1,\ldots,a^{n-2},u,v| y_1,\ldots,y_{n})
\end{align}
for $a^1,\ldots,a^{n-2}\in V, u\in W_1$, $v\in \lwt$,  and $s=1,\ldots,n-2$.

We define an $A(V)$-module homomorphism $\mu^{\prime} : \lws^{*}\rightarrow S(W_1,\lwt)^{\prime}(0)$ by
\begin{align}
\langle \mu^{\prime}(w_{(3)}^{\prime}),u\rangle
&=\langle w_{(3)}^{\prime},\Phi(u)\rangle
\end{align}
for $w_{(3)}^{\prime}\in \Omega_3^{*}$ and $u\in S(W_1,\lwt)(0)$.
By the universality of the generalized Verma module 
$S(\lws^{*})$, we have a $V$-module homomorphism 
\begin{align}
\mu^{\prime} : S(\lws^{*})\rightarrow S(W_1,\lwt)^{\prime}
\end{align}
and therefore its dual
\begin{align}
\mu^{\prime\prime} : S(W_1,\lwt)^{\prime\prime}\rightarrow S(\lws^{*})^{\prime}.
\end{align}
Restricting $\mu^{\prime\prime}$ to $S(W_1,\lwt)$, we have
\begin{align}
\mu : S(W_1,\lwt)\rightarrow S(\lws^{*})^{\prime}.
\end{align}
Then the map 
\begin{align}
W_1\otimes_{\C} S(\Omega_2)&\rightarrow S(\lws^{*})^{\prime}\db{x}\nonumber\\
u\otimes v&\mapsto \sum_{i\in\Z}\mu(u(i)v)x^{-i-1}
\end{align}
is the desired $\Z$-graded intertwining operator by the definition of $\mu$.
\end{proof}

The following result is a direct consequence of Theorem \ref{theorem:correspondence}.
\begin{corollary}\label{corollary:correspondence}
	Let $W_i=\oplus_{j=0}^{\infty}W_i(j), i=1,2,3$ be three $\N$-graded weak $V$-modules such that
	$W_2$ and $W_3^{\prime}$ are generalized Verma $V$-modules 
	and $\dim_{\C}W_3(j)<\infty$ for all $j\in\N$.
	Then, the  map
	\begin{align}
	\label{eq:one-one-iinter-N-c}
	\inter\binom{W_3}{W_1\ W_2}
	&\rightarrow \Hom_{A(V)}(A(W_1)\otimes_{A(V)}W_2(0),W_3(0))\nonumber\\
	\Phi(\ ,x)&\mapsto o^{\Phi}
	\end{align}
	is a linear isomorphism.
\end{corollary}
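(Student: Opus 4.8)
The plan is to deduce the corollary directly from Theorem~\ref{theorem:correspondence} by specializing the two left $A(V)$-modules to $\lwt=W_2(0)$ and $\lws=W_3(0)$, and then identifying $S(\lwt)$ with $W_2$ and $S(\lws^{*})^{\prime}$ with $W_3$ as $\N$-graded weak $V$-modules. Once these two module identifications are in place and shown to restrict to the natural maps on the degree-zero components, the domain $\inter\binom{S(\lws^{*})^{\prime}}{W_1\ S(\lwt)}$ of the theorem's map coincides with $\inter\binom{W_3}{W_1\ W_2}$, its codomain $\Hom_{A(V)}(A(W_1)\otimes_{A(V)}\lwt,\lws)$ coincides with $\Hom_{A(V)}(A(W_1)\otimes_{A(V)}W_2(0),W_3(0))$, and the statement follows.

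First I would identify $S(\lwt)$ with $W_2$. Since $\lwt=W_2(0)$ and $W_2$ is a generalized Verma $V$-module, the uniqueness up to isomorphism of the generalized Verma module with prescribed degree-zero component (\cite[Theorem 6.2]{DLM1}, recalled in Section~\ref{section:preliminary}) furnishes a $V$-module isomorphism $\psi_2\colon W_2\rightarrow S(\lwt)$ restricting to the identity on $W_2(0)=S(\lwt)(0)$. Next I would identify $S(\lws^{*})^{\prime}$ with $W_3$. Here I use that $W_3^{\prime}=\bigoplus_{j=0}^{\infty}\Hom_{\C}(W_3(j),\C)$ is, by hypothesis, a generalized Verma $V$-module with degree-zero component $W_3(0)^{*}=\lws^{*}$; the same uniqueness statement gives $W_3^{\prime}\cong S(\lws^{*})$, and taking graded duals yields $(W_3^{\prime})^{\prime}\cong S(\lws^{*})^{\prime}$. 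Finally, because $\dim_{\C}W_3(j)<\infty$ for every $j$, the canonical maps $W_3(j)\rightarrow W_3(j)^{**}$ are isomorphisms and assemble into a $V$-module isomorphism $W_3\cong(W_3^{\prime})^{\prime}$; composing, I obtain $\psi_3\colon S(\lws^{*})^{\prime}\rightarrow W_3$ whose degree-zero restriction is the canonical double-dual identification $\lws^{**}\cong W_3(0)$.

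The two isomorphisms $\psi_2$ and $\psi_3$ induce a linear isomorphism from $\inter\binom{S(\lws^{*})^{\prime}}{W_1\ S(\lwt)}$ onto $\inter\binom{W_3}{W_1\ W_2}$, sending $\Phi(\ ,x)$ to the operator $\widetilde{\Phi}$ defined by $\widetilde{\Phi}(u,x)v=\psi_3(\Phi(u,x)\psi_2(v))$ for $u\in W_1$ and $v\in W_2$; this is again a $\Z$-graded intertwining operator because $\psi_2$ and $\psi_3$ are degree-preserving $V$-module homomorphisms, so both conditions of Definition~\ref{definition:Z-inter} are preserved, and the inverse is built from $\psi_2^{-1},\psi_3^{-1}$. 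The crux of the argument, and the step I expect to require the most care, is checking that this induced isomorphism is compatible with the assignment $\Phi\mapsto o^{\Phi}$, that is, that the square relating the map of Theorem~\ref{theorem:correspondence} to the map \eqref{eq:one-one-iinter-N-c} commutes. Since $o^{\Phi}(u)=\Phi(u;\deg u-1)$ involves only the action on the degree-zero components, and since $\psi_2$ and $\psi_3$ restrict there to the chosen identifications $W_2(0)=\lwt$ and $\lws^{**}\cong W_3(0)$, one obtains $o^{\widetilde{\Phi}}=o^{\Phi}$ under the identifications $\lwt=W_2(0)$ and $\lws=W_3(0)$. With this compatibility established, the linearity and bijectivity of \eqref{eq:one-one-iinter-N-c} follow at once from Theorem~\ref{theorem:correspondence}. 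The only genuine subtlety is the bookkeeping ensuring that the double-dual identification $\lws^{**}\cong\lws$ built into $\psi_3$ agrees with the one implicit in the codomain of the theorem's map, which is precisely where the hypothesis $\dim_{\C}W_3(j)<\infty$ is used.
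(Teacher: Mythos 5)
Your proposal is correct and takes essentially the same approach as the paper: the paper states Corollary \ref{corollary:correspondence} as a direct consequence of Theorem \ref{theorem:correspondence} with no written proof, and your argument supplies exactly the intended omitted steps, namely the identifications $W_2\cong S(W_2(0))$ and $W_3\cong (W_3^{\prime})^{\prime}\cong S(W_3(0)^{*})^{\prime}$ via the uniqueness of generalized Verma modules and the hypothesis $\dim_{\C}W_3(j)<\infty$, together with the check that transporting $\Phi$ along these degree-preserving isomorphisms satisfies $o^{\widetilde{\Phi}}=o^{\Phi}$ on degree-zero components. Your explicit attention to the double-dual identification $\lws^{**}\cong\lws$ is exactly the point where the finite-dimensionality assumption enters, so nothing is missing.
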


\begin{remark}
For arbitrary $c,h\in \C$, $M_{c,h}$ denotes the Verma module 
for the Virasoro algebra of central charge $c$ with lowest weight $h$
and $M_{c}$ denotes the quotient space of $M(c,0)$ by the submodule generated by $L(-1){\bf 1}$
where {\bf 1} is a lowest weight vector of weight $0$ for $M(c,0)$.
Then, $M_{c}$ is a vertex operator algebra and $M_{c,h}$ is an $M_{c}$-module (cf. \cite{LL}).
If $V=M_c$ and $W_1=M_{c,h}$ in  Theorem \ref{theorem:correspondence}, then
using the same computation as in \cite[Section 2]{Li}, we can describe 
the right-hand side of \eqref{eq:one-one-iinter-N} as follows. 
It is shown in \cite[Section 4]{FZ} (see also \cite[Proposition 3.1]{DMZ}) that
\begin{align}
A(M_c)&\rightarrow \C[t]\nonumber\\
(L(-2)+L(-1))^n&\mapsto t^n,\quad n\in\N
\end{align}
and
\begin{align}
A(M_{c,h})&\rightarrow\C[t_1,t_2]\nonumber\\
(L(-2)+2L(-1)+L(0))^m((L(-2)+L(-1))^nv_{h}&\mapsto t_1^{m}t_2^n,\quad m,n\in\N
\end{align}
are isomorphisms where $v_h$ is a non-zero element of $M(c,h)$ with weight $h$ 
and
the $\C[t]$-bimodule structure on $\C[t_1,t_2]$
is given by
\begin{align}
t^n\cdot f(t_1,t_2)&=t_1^nf(t_1,t_2)\mbox{ and }\nonumber\\
f(t_1,t_2)\cdot t^n&=t_2^nf(t_1,t_2)
\end{align}  
for $n\in\N$ and $f(t_1,t_2)\in \C[t_1,t_2]$. Thus, we have
\begin{align}
&\Hom_{A(M_c)}(A(M_{c,h})\otimes_{A(M_c)}\lwt,\lws)\nonumber\\
&\cong
\Hom_{\C[t]}(\C[t_1,t_2]\otimes_{\C[t]}\lwt,\lws)\nonumber\\
&\cong
\Hom_{\C}(\lwt,\lws).
\end{align}
\end{remark}

\section{Notation}\label{section:notation}
\begin{tabular}{ll}
$M[\wx,\wx^{-1}]_{[p,q]}$&$=\{\sum_{i=p}^{q}u_{i}x^{i}\ |\ u_p,u_{p+1},\ldots,u_{q}\in M\}$.\\
$A(V)$ & the Zhu algebra of a vertex operator algebra $V$.\\
$A(M)$ & the $A(V)$-bimodule associated with a weak $V$-module $M$.\\
$\Res_{x}f(x)$ & $=f_{-1}$ for $f(x)=\sum_{i\in\Z}f_ix^i$.\\
$I\binom{W_3}{W_1\ W_2}$&
the space of all intertwining operators of type $\binom{W_3}{W_1\ W_2}$.\\
$I_{\log}\binom{W_3}{W_1\ W_2}$&
the space of all logarithmic intertwining operators of type $\binom{W_3}{W_1\ W_2}$.\\
$I_{\Z}\binom{W_3}{W_1\ W_2}$&
the space of all $\Z$-graded intertwining operators of type $\binom{W_3}{W_1\ W_2}$.\\
$\Omega_{(2)},\Omega_{(3)}$ & left $A(V)$-modules.\\
$\Delta$ & $V=\bigoplus_{i=\Delta}^{\infty}V_i$.\\
$T(U)$ & the tensor algebra of a vector space $U$.\\
$T(W_1,\lwt)$ & $=T((V\oplus W_1\oplus \lwt)[t,t^{-1}])$.\\
$\free(W_1,\lwt)$ & $=T(V[t,t^{-1}])\otimes_{\C} W_1[t,t^{-1}]\otimes_{\C}T(V[t,t^{-1}])\otimes_{\C}\lwt$.\\
$S(U)=\oplus_{j=0}^{\infty}S(U)(j)$ & the generalized Verma module with $S(U)(0)=U$\\
& where $U$ is a left $A(V)$-module.\\
$J(W_1,\lwt)$ & Definition \ref{definition:ideal}.\\
$S(W_1,\lwt)$ & $=\free(W_1,\lwt)/\ideal (W_1,\lwt)$, \eqref{eq:definition-S}.\\
${U}_{\{\wy_1,\ldots,\wy_n\}}$&
$=U[[\wy_i-\wy_j\ |\ 1\leq i<j\leq n]][(\wy_i-\wy_j)^{-1}\ |\ 1\leq i<j\leq n],
\eqref{eq:Uwx-wx}$.\\
$\iota_{(i,j)}$ & a linear map ${U}_{\{\wy_1,\ldots,\wy_n\}}\rightarrow 
{U}_{\{\wy_1,\ldots,\widehat{\wy_{i}},\ldots,\wy_n\}}\db{\wy_{i}-\wy_{j}}$ defined by \eqref{eq:iotaij}.\\
$\iota_{(i_1,j_1),\ldots,(i_k,j_k)}$ & \eqref{eqn:iota-i-2-j-2} and \eqref{eqn:iota-i-k-j-k}.\\
$W^{*}$ & $=\Hom_{\C}(W,\C)$.\\
$W^{\prime}$ & $=\oplus_{i=0}^{\infty}\Hom_{\C}(W(i),\C)$ for $W=\oplus_{i=0}^{\infty}W(i)$.
\end{tabular}

\section*{Acknowledgements}

I thank Haisheng Li for useful comments.
I thank the anonymous referee for helpful comments.

\end{document}